\documentclass{article}
\usepackage{amssymb,amsmath,graphicx,amsthm,mathrsfs}
\usepackage{color}
\usepackage{mathrsfs}
\usepackage{pdfsync}
%\usepackage{showkeys}
%\usepackage{refcheck}
%%%%%%%%%%%%%%%%
%\hypersetup{linkcolor=blue,urlcolor=blue,citecolor=red}
%%%%%%%%%%%%%%%%%%%%%%%%%%%%%%%%%%%%%%%
\topmargin -1cm
\textheight 21cm
\textwidth 15cm 
\oddsidemargin 1cm
%%%%%%%%%%%%%%%%%%%%%%%%%%%%%%%%%%%%%%%%%%%%%%%%%%%%%

%%%%%%%%%%%%%%%%%%%%%%%%%%%%%%%%%%%%%%%%%%%%%%%%%%%%%

\theoremstyle{plain}
\newtheorem{theorem}{Theorem}

\newtheorem{lemma}{Lemma}

%%%%%%%%%%%%%%%%%%%%%%%%%%%%%%%%%%%%%%%%%%%%%%
\theoremstyle{definition}

\theoremstyle{remark}

\newtheorem{remark}{Remark}
\numberwithin{equation}{section}

%\renewcommand{\baselinestretch}{0.9}
%---------------------------------------------------------------------------

\newcommand{\lc}
{\mathrel{\raise2pt\hbox{${\mathop<\limits_{\raise1pt\hbox
{\mbox{$\sim$}}}}$}}}

\newcommand{\gc}
{\mathrel{\raise2pt\hbox{${\mathop>\limits_{\raise1pt\hbox{\mbox{$\sim$}}}}$}}}

\newcommand{\ec}
{\mathrel{\raise2pt\hbox{${\mathop=\limits_{\raise1pt\hbox{\mbox{$\sim$}}}}$}}}

%%%%%%%%%%%%%%%%%%%%%%%%%%%%%%%%%%%%%%%%%%%%%%%%%%%%%%%%%%%%%%%%
%%%%%%%%%%%%%%%%%%%%%%%%%%%%%%%%%%%%%%%%%%%%%%%%%%%%%%%
\def \p  {\partial}
\def \d {\Delta}
\def \l {\lambda}

\def \om{\Omega}
%%%%%%%%%%%%%%%%%%%%%%%%%%%%%%%%%%%%%%%%%%%%%%%%%%%%%%%%%%%%%%%%%%%

\begin{document}
%%%%%%%%%%%%%%%%%%%%%%%%%%%%%%%%%%%%%%%%%%%%%%%%%%%%%%%
\title{\textbf{Quantitative unique continuation for the heat equation with Coulomb potentials}}
%%%%%%%%%%%%%%%%%%%%%%%%%%%%%%%%%%%%%%%%%%%%%%%%%%%%%%%%%%%%%%
\author{
Can Zhang\thanks{ 
 School of Mathematics and Statistics, Wuhan University, 430072 Wuhan, China. 
%[2] Department of Mathematics, University of the Basque Country (UPV/EHU),
%48940 Leioa, Spain. 
Email address: {\it zhangcansx@163.com}. 
}
}

%\date{}
\maketitle
%%%%%%%%%%%%%%%%%%%%%%%%%%%%%%%%%%%%%%%%%%%%%%%%%%%%%%%%%%%%

%\begin{center}
%{\it Dedicated to Professor Jiongmin Yong on the occasion of his $60^{th}$ birthday }
%\end{center}

\begin{abstract}
In this paper, we establish a  H\"older-type quantitative estimate  of unique continuation  for solutions to the heat equation with Coulomb potentials in either a bounded convex domain or a $C^2$-smooth bounded domain.  The approach is based on the frequency function method, as well as some parabolic-type Hardy inequalities.

\bigskip

\noindent\textbf{Keywords.}  Unique continuation, frequency function method,
parabolic-type Hardy inequalities

\medskip
\noindent\textbf{AMS Subject Classifications.} 93B07, 93C25

\end{abstract}

%%%%%%%%%%%%%%%%%%%%%%%%%%%%%%%%%%%%%%%%%%%%%%%%%%%%

\section{Introduction and main results}
This paper is concerned with the quantitative unique continuation  property
for solutions to the heat equation with singular Coulomb potentials at the origin
\begin{equation}\label{6171}
\left\{
\begin{split}
&\p_tu-\d u -\frac{k}{|x|}u=0&\text{in}\;\;\Omega\times(0,+\infty),\\
&u=0&\text{on}\;\;\p\Omega\times(0,+\infty),\\
&u(\cdot,0)\in L^2(\Omega),
\end{split}\right.
\end{equation}
where $k\in\mathbb R$ and $\Omega\subset\mathbb{R}^n$  ($n\geq3$)  is a
bounded Lipschitz  domain which contains the origin.
It is well-known that, for each initial value $u(\cdot,0)\in L^2(\Omega)$ and each $T>0$, Equation \eqref{6171} has a unique solution $u\in C([0,T];L^2(\Omega))\cap L^2(0,T;H_0^1(\Omega))$ (cf., e.g., \cite{Lions2} or \cite{VZ}).

Recall that the space-like strong unique continuation property for any solution $u$ to parabolic equations is as follows.  For any point $x_0\in \Omega$ and any time $t_0>0$, if $u(\cdot,t_0)$ vanishes of infinite order at the point $x_0$ (i.e., 
\begin{equation*}\label{dudu1}
\int_{B_r(x_0)}|u(x,t_0)|^2\,dx=\mathrm o(r^m)\;\;\;\;\text{as} \;\;r\rightarrow0^+,
\end{equation*}
for any positive integer $m$), then $u(\cdot,t_0)\equiv0$ in $\Omega$. Furthermore, if $u$ is zero on the lateral boundary $\partial\Omega\times(0,t_0)$, then $u\equiv 0$ in $\Omega\times(0,t_0)$ by the backward uniqueness property, see   \cite{EF} or \cite{L} for instance.
In other words, when a solution of parabolic equations enjoys such a space-like strong unique continuation property, then it either vanishes in $\Omega$ or  cannot vanishes of infinite order at any point in $\Omega$.
This kind of  unique continuation for solutions to general second order parabolic equations have been established in the works \cite{CXY,EF,EFV,F,L} and  references therein.

Moreover, quantitative estimates of strong unique continuation for second order parabolic equations, such as the doubling property and the two-ball one-cylinder inequality,
have been well understood (see, e.g., \cite{EFV,Phung-Wang1,pw2,PWZ}). We refer to \cite{V1} for a more extensive review on this subject. We also mention that the unique continuation property for stochastic parabolic equations  has been recently studied in \cite{qi1, qi2, xu}.

The aim of this paper is to establish the following quantitative unique continuation:  Given a nonempty open subset $\omega$ of $\Omega$, 
there are
constants $N=N(\Omega,\omega,k,n,T)\geq1$ and
$\alpha=\alpha(\Omega,\omega,k,n)$ with $\alpha\in(0,1)$ such that for any
solution $u$ to Equation~\eqref{6171} and for any $T\in(0,1]$,
\begin{equation}\label{6172}
\|u(\cdot,T)\|_{L^2(\Omega)}
\leq \|u(\cdot,T)\|_{L^2(\omega)}^{\alpha}
\big(N\|u(\cdot,0)\|_{L^2(\Omega)}\big)^{1-\alpha}\,\;\;\;\;\text{for all}\, \,u(\cdot,0)\in L^2(\Omega).
\end{equation}

%Note that \eqref{6172} is a quantitative version of the space-like strong unique continuation for parabolic equations. Indeed, if one assumes that the above  estimate  is true from the observation $\omega=B_r(x_0)$ with  $x_0\in \Omega$ and  $r>0$ small enough, then
%$$
%\|u(\cdot,T)\|_{L^2(B_{2r}(x_0))}\leq O(\|u(\cdot,T)\|_{L^2(B_{r}(x_0))}).
%$$
%However,  one can find that if $u(\cdot,T)$ vanishes of infinite order at $x_0\in \Omega$ the above 
%inequality is impossible by taking $m\rightarrow +\infty$, unless $u(\cdot,T)\equiv 0$ in an neighborhood of $x_0$.
%Thus, the above-mentioned strong unique continuation holds.

This kind of H\"{o}lder-type quantitative  estimate of unique continuation was first established in \cite{Phung-Wang1} for the heat equation with bounded potentials in a bounded convex domain. Later on,
it has been extended in \cite{PWZ} to  the case of   bounded domain with a $C^2$-smooth boundary (see also
\cite{BP,pw2}). 
Using sharp analyticity estimates for solutions to general parabolic equations or systems with analytic coefficients,  such a kind of quantitative estimate have been established in a series of recent works \cite{AEWZ,EMS,EMS2}.

The purpose of this paper is twofold. Firstly, although the quantitative estimate \eqref{6172} of unique continuation   for the heat equation with $L^q(\Omega)$ potentials  for any $q>n$ has been established in \cite{pw2},
however, it is still unknown so far for the heat equation  with the Coulomb potential, since the Coulomb potential does not fall into  the class of $L^q(\Omega)$ with some $q>n$. 

Secondly, several applications for the above interpolation estimate \eqref{6172} in Control Theory, such as impulse control, observability inequalities from measurable subsets, and bang-bang properties of optimal controls for parabolic equations, have been recently discussed in \cite{AEWZ,EMS,EMS2,pw2,pwx,PWZ,wz,WZ,WY,Yu,ZB}.

The main results of this paper are included in Theorems \ref{g-t} and \ref{global} below. 
In order to present the basic ideas in our strategy, 
the first one below is for a particular case that the bounded  regular domain $\Omega$ has the convex structure  and the interior observation region $\omega$ is a ball.  Moreover,  one can specify the explicit expression of the dependency of two    constants appearing in  \eqref{6172} for this particular case.

\begin{theorem}\label{g-t}
Let $\Omega$ be a bounded convex domain. Assume $x_0\in\Omega$ and $r>0$ to be such that $B_r(x_0)\subset\Omega$.  Then, there exists a constant $N=N(k,n)\geq1$  such that for any
solution $u$ to Equation~\eqref{6171} and any $0<T\leq 1$,
\begin{equation}\label{1127-6-c}
\begin{split}
\int_{\Omega} |u(x,T)|^2\,dx
&\leq  2\left(\int_{B_{r}(x_0)}\!\!\!|u(x,T)|^2\,dx\right)
^{\alpha(r)}\left(Ne^{\frac{R_\Omega^2}{2T}}\int_\Omega |u(x,0)|^2\,dx\right)^{1-\alpha(r)}\\
\end{split}
\end{equation}
with
\begin{equation*}
\alpha(r)=\frac{1}{1+\frac{32R_\Omega^2}{r^2}
e^{\frac{k^2}{\mu^*}}},
\end{equation*}
where $\mu^*:=(n-2)^2/4$ and $R_\Omega$ is the diameter of $\Omega$.\footnote{Note that $\mu^*$ is the best constant of the  Hardy inequality.} 
\end{theorem}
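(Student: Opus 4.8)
The plan is to combine the frequency function method with parabolic Hardy inequalities in the spirit of \cite{Phung-Wang1,pw2,PWZ}, adapting every step to accommodate the singular Coulomb term $-k|x|^{-1}u$. First I would reduce the global estimate \eqref{1127-6-c} to a local \emph{two-ball one-cylinder} type inequality centered at the observation ball $B_r(x_0)$: by a standard propagation-of-smallness (chain of balls / telescoping) argument exploiting the convexity of $\Omega$, it suffices to prove that for a solution $u$ on a cylinder one has a bound of the form
\begin{equation*}
\|u(\cdot,T)\|_{L^2(B_\rho)} \leq C\,\|u(\cdot,T)\|_{L^2(B_{\rho/2})}^{\theta}\,\|u(\cdot,T)\|_{L^2(B_{2\rho})}^{1-\theta},
\end{equation*}
together with a quantitative time-propagation estimate that transfers smallness of $u(\cdot,0)$ in $L^2(\Omega)$ to smallness of $u(\cdot,T)$; the factor $e^{R_\Omega^2/(2T)}$ and the $2$ in front of \eqref{1127-6-c} will emerge from this reduction, with the explicit $\alpha(r)$ tracked through the iteration. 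The key inputs from earlier in the paper are the parabolic-type Hardy inequalities, which control $\int |x|^{-1}|u|^2$ by the Dirichlet energy with the sharp constant $\mu^* = (n-2)^2/4$; this is exactly what produces the factor $e^{k^2/\mu^*}$ in $\alpha(r)$.

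The heart of the matter is the frequency function. I would introduce a Gaussian-weighted version adapted to the Coulomb equation: for a backward-in-time solution (after the usual time reversal $t \mapsto T-t$ so that the equation becomes $\partial_t u + \Delta u + k|x|^{-1}u = 0$) and a suitable weight $G_\sigma(x,t)$ of Appell/heat-kernel type centered at $x_0$, set
\begin{equation*}
H(t) = \int_{\mathbb{R}^n} u^2 G_\sigma\,dx, \qquad D(t) = \int_{\mathbb{R}^n} \Big(|\nabla u|^2 - \frac{k}{|x|}u^2\Big) G_\sigma\,dx, \qquad \mathcal{N}(t) = \frac{D(t)}{H(t)}.
\end{equation*}
Then I would compute $\frac{d}{dt}\log H$ and, more delicately, $\mathcal{N}'(t)$. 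The computation of $H'$ is routine integration by parts; differentiating $\mathcal{N}$ produces the usual Cauchy--Schwarz "spectral gap" term (nonnegative) plus error terms coming from (a) the non-self-adjoint cross terms involving $\nabla G_\sigma$, handled by the log-convexity of the Gaussian weight exactly as in the potential-free case, and (b) the Coulomb term. The crucial new point is that the Coulomb contribution to $D$ is \emph{not} sign-definite, so one cannot simply drop it; instead, the Hardy inequality with constant $\mu^*$ lets one absorb $k\int |x|^{-1}u^2 G_\sigma$ into a fraction of $\int|\nabla u|^2 G_\sigma$ at the cost of an additive constant proportional to $k^2/\mu^*$ times $H$, giving a differential inequality of the form $\mathcal{N}'(t) \geq -C(1 + \mathcal{N}(t))$ with $C$ depending only on $k,n$ through $k^2/\mu^*$. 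Integrating this yields an almost-monotonicity of the frequency and hence a doubling inequality for $H$ at the fixed time $T$.

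From the doubling inequality one extracts the three-ball inequality at time $T$ in the standard way (comparing $H$ at radii $\rho/2, \rho, 2\rho$ by iterating the doubling estimate), with the exponent $\theta$ controlled explicitly by the frequency bound; this is where the $32 R_\Omega^2/r^2$ and the interpolation exponent $\alpha(r)$ in the statement get their precise form once the chain-of-balls count is optimized over the geometry of the convex domain. The main obstacle I anticipate is \emph{not} the reduction steps, which are by now classical, but rather making the frequency computation rigorous near the singularity $x=0$: one must justify all the integrations by parts and boundary-term vanishing for $H^1_0$ solutions that are only weakly differentiable near the origin, and one must verify that $H(t) > 0$ and $\mathcal{N}(t)$ is finite, i.e., that $D(t)$ is well-defined — again the Hardy inequality (and a localization cutoff away from the origin followed by a limiting argument) is what rescues this. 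A secondary technical point is that $x_0$ need not be the origin, so the weight is centered away from the singularity while the potential is singular at $0$; one must therefore estimate $|x|^{-1}$ on the support of $G_\sigma$ and near $0$ separately, splitting the Coulomb integral accordingly and using Hardy only on the piece that actually sees the origin, while the rest is a bounded perturbation. Once these points are secured, assembling \eqref{1127-6-c} with the stated constants is a bookkeeping exercise.
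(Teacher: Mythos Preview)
Your plan contains a structural misconception that would prevent you from reaching the explicit exponent $\alpha(r)=\bigl(1+\tfrac{32R_\Omega^2}{r^2}e^{k^2/\mu^*}\bigr)^{-1}$. In the convex case the paper does \emph{not} reduce to a three-ball inequality followed by a chain-of-balls propagation; that localized/iterative scheme is reserved for the $C^2$ case (Theorem~\ref{global}) and necessarily produces nonexplicit exponents, since each link in the chain multiplies powers. Here the argument is global and one-shot: on the whole of $\Omega$ one works with the Gaussian-weighted quantities
\[
H_\lambda(t)=\int_\Omega u^2\,G_{\lambda,x_0}\,dx,\qquad I_\lambda(t)=\int_\Omega |\nabla u|^2\,G_{\lambda,x_0}\,dx,\qquad N_\lambda=\frac{I_\lambda}{H_\lambda},
\]
and invokes the single inequality (from \cite[Lemma~2.5]{pw2})
\[
\int_\Omega u^2 e^{-\frac{|x-x_0|^2}{4\lambda}}\,dx\le \int_{B_r(x_0)} u^2 e^{-\frac{|x-x_0|^2}{4\lambda}}\,dx+\frac{16\lambda}{r^2}\Bigl[\lambda N_\lambda(T)+\tfrac n4\Bigr]\int_\Omega u^2 e^{-\frac{|x-x_0|^2}{4\lambda}}\,dx,
\]
after which a specific choice of $\lambda=\lambda_0$ makes the second right-hand term absorb with coefficient $1/2$. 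Convexity enters exactly once, through the sign of the boundary term $\mathcal A=\int_{\partial\Omega}\tfrac{(x-x_0)\cdot\nu_x}{2(T-t+\lambda)}(\partial_\nu u)^2 G_{\lambda,x_0}\,d\sigma\ge 0$ in the monotonicity computation for $N_\lambda$; it plays no role in any covering argument. The constants $32R_\Omega^2/r^2$ and $e^{k^2/\mu^*}$ arise directly from this choice of $\lambda_0$ and from the differential inequality $\frac{d}{dt}\bigl[(T-t+\lambda)N_\lambda\bigr]\le \tfrac{k^2}{\mu^*}(T-t+\lambda)N_\lambda+\text{const}$, not from any ball count.

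Two smaller points. First, the paper does not build the potential into the frequency: it keeps $I_\lambda=\int|\nabla u|^2 G_{\lambda,x_0}$ and treats the equation as $\partial_t u-\Delta u=f$ with $f=k|x|^{-1}u$, so the Coulomb term shows up only as $\int f^2 G_{\lambda,x_0}/H_\lambda$ and $\int uf\,G_{\lambda,x_0}/H_\lambda$, which are then controlled by the parabolic Hardy inequalities (Lemmas~\ref{hardytype} and~\ref{morepei}). Your variant with $D(t)=\int(|\nabla u|^2-k|x|^{-1}u^2)G_\sigma$ could perhaps be made to work, but it is not what is done here and it complicates the monotonicity computation. Second, your worry about $x_0\neq 0$ requiring a splitting of the Coulomb integral is unfounded: Lemmas~\ref{hardytype} and~\ref{morepei} are proved by applying the standard Hardy inequality (centered at $0$) to $g=\varphi\,e^{-|x-x_0|^2/(8\lambda)}\in H_0^1(\Omega)$, and the mismatch between the singularity center and the weight center is absorbed automatically into the lower-order terms.
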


Next, we turn to state the main result for  the general class of $C^2$-smooth domains.

\begin{theorem}\label{global}
Let $\Omega$ be a bounded domain with a $C^2$-smooth boundary, and let $\omega\subset\Omega$ be a non-empty open subset. Then, there are constants $N=N(\Omega,\omega,k,n)\geq 1$ and
$\alpha=\alpha(\Omega,\omega,k,n)$ with $\alpha\in(0,1)$ such that  for any
solution $u$ to Equation~\eqref{6171} and any $0<T\leq 1$,
\begin{equation}\label{cccc1}
\int_{\Omega}|u(x,T)|^2\;dx\leq
\left(\int_{\omega}|u(x,T)|^2\;dx\right)^{\alpha}
\left(Ne^{\frac{N}{T}}\int_{\Omega}|u(x,0)|^2\;dx\right)
^{1-\alpha}.
\end{equation}
\end{theorem}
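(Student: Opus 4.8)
\noindent\emph{Proof proposal for Theorem~\ref{global}.}
The plan is to reduce the $C^2$ case to local interpolation inequalities of a common shape, which are then glued by a spatial propagation-of-smallness argument. The global estimate of Theorem~\ref{g-t} is not available here because the frequency-function argument behind it, centered at the origin, uses the convexity of $\Omega$ (through the nonnegativity of a boundary term of the type $\int_{\partial\Omega}(x\cdot\nu)\,|\partial_\nu u|^2$). On the other hand, the Coulomb potential $k/|x|$ is singular only at the interior point $0$ and is bounded on every region bounded away from $0$; there the equation has a bounded potential, and the quantitative unique continuation for the heat equation with bounded potentials in $C^2$ domains (\cite{PWZ}, see also \cite{pw2}) applies, up to and including the $C^2$ boundary, thanks to the zero Dirichlet condition. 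So only a fixed neighborhood of the origin genuinely requires the Hardy/frequency machinery, and the task is to produce the local inequalities and chain them.

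\noindent\emph{Step 1 (local inequality at the origin).} First I would fix $r_0>0$ with $\overline{B_{3r_0}(0)}\subset\Omega$, choose a smooth cutoff $\eta$ with $\eta\equiv1$ on $B_{2r_0}(0)$ and $\mathrm{supp}\,\eta\subset B_{3r_0}(0)$, and run the frequency-function and parabolic-Hardy machinery of Theorem~\ref{g-t} for $v:=\eta u$. Since $v$ is compactly supported in the \emph{ball} $B_{3r_0}(0)$ — which is convex — the argument produces no harmful boundary term; the price is that $v$ solves the inhomogeneous equation $\partial_t v-\Delta v-\frac{k}{|x|}v=g$ with $g:=-2\nabla\eta\cdot\nabla u-(\Delta\eta)u$ supported in the annulus $\{2r_0\le|x|\le3r_0\}$, where $k/|x|$ is bounded, so that interior parabolic regularity bounds $\|g\|_{L^2}$ by the $L^2$ norm of $u$ over a slightly larger set at a slightly earlier time, hence — after an energy estimate — by $\|u(\cdot,0)\|_{L^2(\Omega)}$; thus $g$ is a genuinely lower-order perturbation. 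Carrying this through should yield
\[
\int_{B_{r_0}(0)}|u(x,T)|^2\,dx\le C\left(\int_{B_{r_0/2}(0)}|u(x,T)|^2\,dx\right)^{\beta}\left(Ce^{C/T}\int_{\Omega}|u(x,0)|^2\,dx\right)^{1-\beta},
\]
with $C=C(k,n,r_0)$ and $\beta=\beta(k,n,r_0)\in(0,1)$.

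\noindent\emph{Step 2 (away from the origin) and Step 3 (propagation).} On every ball $B$ with $2B\subset\Omega\setminus\{0\}$ and on every half-ball adapted to a point of $\partial\Omega$, the potential is bounded, so \cite{PWZ,pw2} provide local interpolation inequalities of the same shape as the display above, with constants depending on $\Omega,\omega,k,n$ and the chosen set. Since $\overline{\Omega}$ is connected, one can cover it by finitely many of the sets of Steps~1--2, chained so that a fixed smaller copy of each is contained in the previous one and the first one lies in $\omega$; iterating the local interpolation inequalities along the chain — by the standard combination procedure for such inequalities (see, e.g., \cite{Phung-Wang1,pw2,AEWZ}) — propagates the $L^2(\omega)$ smallness at time $T$ to $L^2(\Omega)$. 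The exponents multiply to a single $\alpha=\alpha(\Omega,\omega,k,n)\in(0,1)$, the finitely many factors $e^{C/T}$ combine into $e^{N/T}$, and one obtains \eqref{cccc1}.

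\noindent\emph{Main obstacle.} The crux is Step~1: transplanting the frequency-function monotonicity of Theorem~\ref{g-t} to $v=\eta u$ in the presence of the source term $g$. One must show that $g$ does not destroy the differential inequality for the localized frequency function, which means absorbing $\|g\|$ into the good terms and estimating it by the low-order quantities via the parabolic Hardy inequalities together with interior parabolic estimates on the annulus where the potential is regular; keeping the time-weight of the frequency function under control so that the final loss is exactly of the form $e^{N/T}$ is the delicate part of the bookkeeping. Steps~2 and 3 are, by contrast, essentially routine given the cited results.
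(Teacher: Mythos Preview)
Your overall plan---localize near the singularity with a cutoff, run the frequency-function argument on $v=\eta u$, cite \cite{PWZ,pw2} where the potential is bounded, and chain---is close to the paper's route (which instead proves a single local interpolation lemma, Lemma~\ref{interpolation upto boundary}, valid on every star-shaped piece $\Omega\cap B_{(1+2\delta)R}(x_0)$ and handling the Coulomb term uniformly via Lemmas~\ref{hardytype} and~\ref{morepei}); your version is more modular, but the core difficulty is the same and sits exactly in your Step~1.

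There the proposal has a genuine gap. You observe that $g=-2\nabla\eta\cdot\nabla u-(\Delta\eta)u$ lives on an annulus where $k/|x|$ is bounded, and that interior regularity plus the forward energy estimate give $\|g(\cdot,t)\|_{L^2}\le C\|u_0\|_{L^2(\Omega)}$. But in the monotonicity inequality of Lemma~\ref{monotonicity}\,(ii) the commutator enters through the \emph{ratio} $\int f^2 G_{\lambda,x_0}\,dx\big/H_\lambda(t)$: an upper bound on the numerator (of size $e^{-c/(T-t+\lambda)}\|u_0\|^2$ thanks to the Gaussian weight on the annulus) is useless without a \emph{lower bound} on $H_\lambda(t)=\int v^2 G_{\lambda,x_0}\,dx$ over a short window $[T-h_0,T]$. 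In the paper this is a separate, nontrivial ingredient---the localized backward energy estimate, Lemma~\ref{backwardenergyestimate}: $\|u_0\|^2_{L^2(\Omega)}\le C(k)\,e^{1/h_0}\|u(\cdot,t)\|^2_{L^2(\Omega\cap B_{(1+\delta)R}(x_0))}$ for $t\in[T-h_0,T]$, with $h_0$ determined by $\|u_0\|^2\big/\|u(\cdot,T)\|^2_{L^2(\Omega\cap B_R(x_0))}$. Your ``main obstacle'' paragraph proposes only to upper-bound $g$; without this backward lower bound on $H_\lambda$ the ratio cannot be controlled and the $e^{N/T}$ bookkeeping cannot be closed. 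A smaller point: to connect Step~1 to the Step~2 sets, the observation ball in Step~1 should be taken off the origin (say $B_\rho(x_1)$ with $|x_1|>\rho$), which Lemmas~\ref{hardytype}--\ref{morepei} permit since they hold for an arbitrary center; with $B_{r_0/2}(0)$ as written, every chain ball containing it also contains $0$, and the bounded-potential results of \cite{PWZ,pw2} do not apply to those links.
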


\medskip

The proofs of Theorems \ref{g-t} and \ref{global} are based on the weighted frequency function method and two parabolic-type  Hardy inequalities. The frequency function method is well known in the studies of   quantitative estimates of unique continuation for the second elliptic equations, such as the three-ball inequality and the doubling property (cf., e.g., \cite{Lin1}, \cite{Lin2}, \cite{K} and references therein). The extension of this method to the parabolic equations is first made in \cite{Poon}, where the strong unique continuation property  of parabolic equations in the whole space $\mathbb R^n$ was built up. Later on, Escauriaza, et al.,  developed  this method to deduce quantitative estimates of unique continuation   for general second order parabolic equations (see, e.g., \cite{E1}, \cite{EF}, \cite{EFV}, \cite{F} and \cite{Phung-Wang1}). 

Compared with the context  in the earlier  works \cite{Phung-Wang1, pw2, PWZ},  the new difficulty here is how to deal with the singular 
lower-order term.  The novelty  of this paper is to apply some parabolic-type  Hardy inequalities (see Section \ref{ph} below) 
to overcome this difficulty.

The rest of this paper  is organized as follows.  In Section \ref{sec2}, we  apply the global frequency function method to deduce the interpolation inequality \eqref{6172} when $\Omega$ is a bounded and convex domain (i.e., Theorem \ref{g-t}). In Section \ref{sec3}, we will show how to extend it to a $C^2$-smooth bounded domain by localized frequency function method (i.e., Theorem \ref{global}). Finally, we conclude the paper with several comments in Section \ref{sec4}.

\bigskip

\section{Global frequency function method: Proof of Theorem~\ref{g-t}}\label{sec2}

\subsection{Monotonicity property of frequency function}
 For each $\lambda>0$, let us set
\begin{equation*}
G_\l(x,t)=(T-t+\lambda)^{-n/2}e^{-\frac{|x|^2}{4(T-t+\lambda)}},\;
(x,t)\in \mathbb{R}^n\times[0,T],
\end{equation*}
which is the backward caloric function in $
\mathbb{R}^n\times[0,T]$:
$$
(\partial_t +\Delta) G_\l(x,t)=0.
$$
Given any $x_0\in\Omega$, let us write
\begin{equation*}\label{6201}
G_{\lambda, x_0}(x,t)=G_\l(x-x_0,t),\; (x,t)\in
\mathbb{R}^n\times[0,T],\; \l>0.
\end{equation*}
 Let $f\in L^2(\Omega\times(0,T))$ and let $u$ be a
solution of
\begin{equation}\label{coupled1}
\left\{
\begin{split}
&\p_tu-\d u=f(x,t)\;&\text{in}\;\;\Omega\times(0,T),\\
&u=0\;\;\;\;\;\;&\text{on}\;\;\p\Omega\times(0,T),\\
&u(\cdot,0)\in L^2(\Omega).
\end{split}\right.
\end{equation}
Associated with each triplet  $(u, f, x_0)$ (where $f\in
L^2(\Omega\times(0,T))$, $u$ solves (\ref{coupled1}) and
$x_0\in\Omega$), we define  the weighted
frequency function
\begin{equation*}\label{frequency}
N_\l(t)=\frac{I_\l(t)}{H_\l(t)}
\end{equation*}
for all $t\in \{t\in (0,T]; H_\l(t)\neq
0\}$,
where
\begin{equation*}
\begin{split}
I_\l(t)&=\int_{\Omega}|\nabla u(x,t)|^2G_{\l,x_0}(x,t)\,dx,\; t\in (0,T],\; \lambda>0,\\
H_\l(t)&=\int_{\Omega}|u(x,t)|^2G_{\l, x_0}(x,t)
\,dx,\; t\in (0,T],\; \lambda>0.\\
\end{split}
\end{equation*}

We begin with the following lemma, which has been proved  in
\cite{Phung-Wang1}. For the sake of the completeness of the paper,
we provide a probably simple proof here.
\begin{lemma}\label{monotonicity}
Let $f\in L^2(\Omega\times(0,T))$, $u$ be a solution of
\eqref{coupled1} and $x_0\in\Omega$. Then the weighted frequency
function associated with $(u, f, x_0)$ has the following properties:

\noindent \textnormal{(i)}. For each $t\in(0,T]$ with $u(\cdot,t)\neq 0$ in
$L^2(\Omega)$,
\begin{equation*}
N_\l(t)=-\frac{1}{2}\frac{d}{dt}\log(H_\l(t))+
\frac{\int_{\Omega}ufG_{\l,x_0}\,dx}{\int_{\Omega}u^2G_{\l,x_0}
\,dx}\,.
\end{equation*}

\noindent \textnormal{(ii)}. When $\Omega$ is either convex or star-shaped
with respect to $x_0$,
\begin{equation*}
\frac{d}{dt}N_\l(t)\leq
\frac{N_\l(t)}{T-t+\lambda}+\frac{\int_{\Omega}f^2G_{\l,x_0}\,dx}
{H_\l(t)}\,
\end{equation*}
 for each $t\in(0,T]$ with $u(\cdot,t)\neq 0$ in
$L^2(\Omega)$.
\end{lemma}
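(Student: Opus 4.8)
The plan is to differentiate $H_\lambda$ and $I_\lambda$ in $t$ directly, exploiting the two structural properties of the Gaussian weight used throughout: $(\partial_t+\Delta)G_{\lambda,x_0}=0$ and $\nabla_xG_{\lambda,x_0}(x,t)=-\frac{x-x_0}{2(T-t+\lambda)}G_{\lambda,x_0}(x,t)$. For part (i), I would differentiate $H_\lambda(t)=\int_\Omega u^2G_{\lambda,x_0}\,dx$ under the integral sign, substitute $\partial_tu=\Delta u+f$, and integrate by parts twice: moving one derivative off $u\,\Delta u$ (with no boundary contribution, since $u=0$ on $\partial\Omega$), and rewriting $\int_\Omega u^2\partial_tG_{\lambda,x_0}\,dx=-\int_\Omega u^2\Delta G_{\lambda,x_0}\,dx=\int_\Omega\nabla(u^2)\cdot\nabla G_{\lambda,x_0}\,dx$. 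The two resulting copies of $\int_\Omega u\,\nabla u\cdot\nabla G_{\lambda,x_0}\,dx$ cancel, leaving $H_\lambda'(t)=-2I_\lambda(t)+2\int_\Omega u f\,G_{\lambda,x_0}\,dx$; dividing by $H_\lambda(t)$ and rearranging gives (i).

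For part (ii), I would start from $\frac{d}{dt}N_\lambda=\frac{I_\lambda'}{H_\lambda}-N_\lambda\frac{H_\lambda'}{H_\lambda}$, so that everything reduces to estimating $I_\lambda'(t)=2\int_\Omega\nabla u\cdot\nabla(\partial_tu)\,G_{\lambda,x_0}\,dx+\int_\Omega|\nabla u|^2\partial_tG_{\lambda,x_0}\,dx$. In the first integral I would integrate by parts (the boundary term vanishes because $\partial_tu=0$ on $\partial\Omega$) and use $\Delta u=\partial_tu-f$ together with the explicit form of $\nabla G_{\lambda,x_0}$. In the second I would write $\partial_tG_{\lambda,x_0}=-\Delta G_{\lambda,x_0}$ and apply the Rellich--Pohozaev identity associated with the vector field $\nabla G_{\lambda,x_0}$; since $u=0$ on $\partial\Omega$ forces $\nabla u=(\partial_\nu u)\nu$ there, this produces exactly one boundary term $\int_{\partial\Omega}(\partial_\nu u)^2\,\partial_\nu G_{\lambda,x_0}\,dS$, plus interior terms computed from $\partial_i\partial_jG_{\lambda,x_0}=\big(-\frac{\delta_{ij}}{2(T-t+\lambda)}+\frac{(x-x_0)_i(x-x_0)_j}{4(T-t+\lambda)^2}\big)G_{\lambda,x_0}$. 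Collecting the interior contributions, they reorganize into a perfect square and one arrives at
\[
I_\lambda'(t)=\frac{I_\lambda(t)}{T-t+\lambda}+\int_{\partial\Omega}(\partial_\nu u)^2\,\partial_\nu G_{\lambda,x_0}\,dS-2\int_\Omega w^2G_{\lambda,x_0}\,dx+2\int_\Omega f\,w\,G_{\lambda,x_0}\,dx,
\]
where $w:=\partial_tu-\frac{(x-x_0)\cdot\nabla u}{2(T-t+\lambda)}$. This is where convexity (or star-shapedness with respect to $x_0$) enters: $\partial_\nu G_{\lambda,x_0}=-\frac{(x-x_0)\cdot\nu}{2(T-t+\lambda)}G_{\lambda,x_0}\le0$ on $\partial\Omega$, so the boundary term is nonpositive and can be discarded.

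It then remains to assemble the two computations. Substituting the bound for $I_\lambda'$ and the formula $H_\lambda'=-2I_\lambda+2\int_\Omega ufG_{\lambda,x_0}\,dx$ into $\frac{d}{dt}N_\lambda$, and using the one-line identity $\int_\Omega u\,w\,G_{\lambda,x_0}\,dx=\tfrac12H_\lambda'(t)$ (which follows from $w\,G_{\lambda,x_0}=\partial_tu\,G_{\lambda,x_0}+\nabla u\cdot\nabla G_{\lambda,x_0}$ and one integration by parts in $\tfrac12\frac{d}{dt}\int_\Omega u^2G_{\lambda,x_0}\,dx$), the algebra collapses to
\[
\frac{d}{dt}N_\lambda(t)\le\frac{N_\lambda(t)}{T-t+\lambda}-\frac{2}{H_\lambda(t)}\int_\Omega w\big(w-f+N_\lambda(t)\,u\big)G_{\lambda,x_0}\,dx.
\]
To control the last integral I would complete the square, $w(w-f+N_\lambda u)=\big(w+\tfrac{N_\lambda u-f}{2}\big)^2-\tfrac14(N_\lambda u-f)^2$, and apply the Cauchy--Schwarz inequality with weight $G_{\lambda,x_0}$ to $\int_\Omega\big(w+\tfrac{N_\lambda u-f}{2}\big)u\,G_{\lambda,x_0}\,dx$, whose value is pinned down by $\int_\Omega uwG_{\lambda,x_0}\,dx=\tfrac12H_\lambda'$. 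After simplification every term involving $N_\lambda$ cancels, leaving $\int_\Omega w(w-f+N_\lambda u)G_{\lambda,x_0}\,dx\ge-\tfrac14\int_\Omega f^2G_{\lambda,x_0}\,dx$, which yields the asserted inequality (in fact with the better constant $\tfrac12\int_\Omega f^2G_{\lambda,x_0}\,dx/H_\lambda(t)$ on the right).

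The step I expect to be the main obstacle is the bookkeeping in the computation of $I_\lambda'$: isolating the single boundary term with the correct sign via the Rellich--Pohozaev identity and recognizing the perfect-square structure of the remaining interior terms, followed by the somewhat delicate double use of Cauchy--Schwarz that is needed to absorb the source term $f$ without leaving a spurious $N_\lambda^2$ on the right-hand side. A secondary issue is justifying the differentiation under the integral sign and the integrations by parts; this rests on standard interior parabolic regularity for solutions of \eqref{coupled1}, with the usual approximation argument near the (convex, hence Lipschitz) boundary, so I would only comment on it briefly.
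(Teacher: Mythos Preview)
Your proposal is correct and follows essentially the same route as the paper: compute $H_\lambda'$ via the backward-caloric identity for $G_{\lambda,x_0}$, compute $I_\lambda'$ so as to isolate the single boundary term $\int_{\partial\Omega}(\partial_\nu u)^2\,\partial_\nu G_{\lambda,x_0}\,dS$ (which is nonpositive under convexity/star-shapedness), recognize the perfect-square structure, and finish with Cauchy--Schwarz. The only organizational difference is that the paper absorbs $f/2$ into the auxiliary quantity from the outset, working with $\tilde w=\partial_t u-\tfrac{(x-x_0)\cdot\nabla u}{2(T-t+\lambda)}-\tfrac{f}{2}$, so that $I_\lambda'=\tfrac{I_\lambda}{T-t+\lambda}-\mathcal A-2\int\tilde w^2G+\tfrac12\int f^2G$ and a single application of Cauchy--Schwarz to $\int \tilde w\,u\,G$ suffices; your choice $w=\tilde w+\tfrac{f}{2}$ forces the extra completion-of-the-square and the second Cauchy--Schwarz step, but the algebra is equivalent and yields the same constant $\tfrac12$.
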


\begin{proof}
 By the Green formula, we have
\begin{equation*}
\begin{split}
\frac{d}{dt}H_\l(t)&=\int_\Omega2u\p_tuG_{\l,x_0}+u^2\p_tG_{\l,x_0}\,dx\\
&=2\int _\Omega u(\p_tu-\Delta u)G_{\l,x_0}\,dx-2\int_\Omega|\nabla u|^2G_{\l,x_0}\,dx\\
&=2\int_\Omega ufG_{\l,x_0}\,dx-2I_\l(t),
\end{split}
\end{equation*}
which leads to  \textnormal{(i)}.\\

We now prove  \textnormal{(ii)}. By the integration by parts, it follows that
\begin{equation}\label{1117-1}
\begin{split}
\frac{d}{dt}H_\l(t)&=2\int_\Omega\Big(\p_tu-\frac{\nabla u\cdot (x-x_0)}{2(T-t+\lambda)}-\frac{f}{2}\Big)uG_{\lambda,x_0}\,dx+
\int_\Omega fuG_{\lambda,x_0}\,dx,\\
\end{split}
\end{equation}

\begin{equation}\label{1117-2}
\begin{split}
I_\l(t)&=-\int_\Omega\Big(\p_tu-\frac{\nabla u\cdot (x-x_0)}{2(T-t+\lambda)}-\frac{f}{2}\Big)uG_{\lambda,x_0}\,dx+
\frac{1}{2}\int_\Omega fuG_{\lambda,x_0}\,dx.\\
\end{split}
\end{equation}
Meanwhile,
\begin{equation}\label{1117-4}
\begin{split}
\frac{d}{dt}I_\l(t)&=2\int_\Omega \nabla u\cdot \nabla (\p_tu)G_{\lambda,x_0}\,dx
+\int_\Omega\nabla(|\nabla u|^2)\cdot \nabla G_{\lambda,x_0}\,dx+\mathcal{A},\\
\end{split}
\end{equation}
where
\begin{equation*}\label{1117-3}
\mathcal{A}=\int_{\p\Omega}\frac{(x-x_0)\cdot \nu_x}{2(T-t+\lambda)}
\left(\frac{\p u}{\p \nu}\right)^2G_{\lambda,x_0}\,d\sigma.
\end{equation*}
 Since
\begin{equation}\label{1117-6}
\begin{split}
\nabla(|\nabla u|^2)\cdot\nabla G_{\lambda,x_0}=\frac{-1}{T-t+\l}
\Big(\nabla u\cdot\nabla\big(\nabla u\cdot (x-x_0)\big)-
|\nabla u|^2\Big)G_{\lambda,x_0},
\end{split}
\end{equation}
it follows from \eqref{1117-4} and \eqref{1117-6} that
\begin{equation}\label{1117-8}
\begin{split}
\frac{d}{dt}I_\l(t)&=2\int_{\Omega}\nabla u\cdot\nabla\Big(\p_t u-\frac{\nabla u\cdot(x-x_0)}{2(T-t+\l)}\Big)G_{\lambda,x_0}\,dx\\
&\;\;\;\;+\frac{I_\l(t)}{T-t+\l}+\mathcal{A}.
\end{split}
\end{equation}
On the other hand, we have by the Green formula,
\begin{equation}\label{1117-10}
\begin{split}
&\int_{\Omega}\nabla u\cdot\nabla\Big(\p_t u-\frac{\nabla u\cdot (x-x_0)}{2(T-t+\l)}\Big)G_{\lambda,x_0}\,dx\\
&=-\mathcal{A}-\int_{\Omega}\Big(\p_tu-\frac{\nabla u\cdot (x-x_0)}{2(T-t+\l)}\Big)\Big(\p_tu-\frac{\nabla u\cdot (x-x_0)}{2(T-t+\l)}-f\Big)G_{\lambda,x_0}\,dx.\\
\end{split}
\end{equation}
From \eqref{1117-8} and \eqref{1117-10}, it holds that
\begin{equation}\label{1117-9}
\begin{split}
\frac{d}{dt}I_\l(t)&=-2\int_{\Omega}\Big(\p_t u-\frac{\nabla u\cdot (x-x_0)}{2(T-t+\l)}-\frac{f}{2}\Big)^2G_{\lambda,x_0}\,dx+\frac{1}{2}
\int_\Omega f^2G_{\lambda,x_0}\,dx\\
&\;\;\;\;+\frac{I_\l(t)}{T-t+\l}-\mathcal{A}.
\end{split}
\end{equation}

Finally, we obtain from \eqref{1117-1}, \eqref{1117-2} and \eqref{1117-9} that
\begin{equation*}
\begin{split}
\frac{d}{dt}N_\l(t)&=\frac{\frac{d}{dt}\big(I_\l(t)\big)H_\l(t)
-I_\l(t)
\frac{d}{dt}H_\l(t)}{H^2_\l(t)}\\
&=\frac{N_\l(t)}{T-t+\l}+\frac{\int_\Omega f^2G_{\lambda,x_0}\,dx}{2H_\l(t)}-\frac{\mathcal{A}}
{H_\l(t)}-\frac{\Big(\int_\Omega fuG_{\lambda,x_0}\,dx\Big)^2}{2H_\l^2(t)}\\
&\;\;\;\;-\frac{2}{H_\l(t)}\int_\Omega \Big(
\p_tu-\frac{\nabla u\cdot (x-x_0)}{2(T-t+\l)}-\frac{f}{2}\Big)^2G_{\lambda,x_0}\,dx\\
&\;\;\;\;+\frac{2}{H^2_\l(t)}\int_\Omega \Big(
\p_tu-\frac{\nabla u\cdot (x-x_0)}{2(T-t+\l)}-\frac{f}{2}\Big)uG_{\lambda,x_0}\,dx.\\
\end{split}
\end{equation*}
Since $\Omega$ is either convex or star-shaped with respect to $x_0$,  it holds
that $(x-x_0)\cdot\nu_x\geq 0$ when $x\in\partial\Omega$ (see for instance  \cite[pp. 515]{evans}). (Here,
$\nu_x$ is the outward normalized vector of $\p\Omega$ at $x$.)
Consequently, $\mathcal{A}\geq 0$. By the Cauchy-Schwartz
inequality,
\begin{multline*}
\left[\int_\Omega\Big(
\p_t u-\frac{\nabla u\cdot (x-x_0)}{2(T-t+\l)}-\frac{f}{2}\Big)u G_{\lambda,x_0}\,dx\right]^2
\\
\leq \int_\Omega \Big(
\p_t u-\frac{\nabla u\cdot (x-x_0)}{2(T-t+\l)}-\frac{f}{2}\Big)^2G_{\lambda,x_0}\,dx
\times
\int_\Omega u^2G_{\lambda,x_0}\,dx,
\end{multline*}
which leads to
\begin{equation*}
\frac{d}{dt}N_\l(t)\leq \frac{N_\l(t)}{T-t+\l}+\frac{\int_\Omega
f^2G_{\lambda,x_0}\,dx}{2H_\l(t)},
\end{equation*}
i.e.,  \textnormal{(ii)} stands. 
\end{proof}

\subsection{Two parabolic-type Hardy
inequalities}\label{ph}

  We next introduce the following two parabolic-type Hardy
inequalities, which are crucial in the proof of main results.
\begin{lemma}\label{hardytype}
Let $x_0\in \Omega$ and $\mu^*:=(n-2)^2/4$, $n\geq3$. Then for each
$\lambda>0$ and each $\varphi\in H^1_0(\Omega)$, it holds that
\begin{equation}\label{pei3}
\begin{split}
\frac{1}{16\l^2}\int_{\Omega}|x-x_0|^2\varphi^2e^{-\frac{|x-x_0|^2}{4\l}}\,dx
\leq \int_{\Omega}\Big(|\nabla \varphi|^2-\frac{\mu^*}{|x|^2}\varphi^2\Big)e^{-\frac{|x-x_0|^2}{4\l}}\,dx\\
+\frac{n}{4\l} \int_{\Omega}\varphi^2e^{-\frac{|x-x_0|^2}{4\l}}\,dx.
\end{split}
\end{equation}
\end{lemma}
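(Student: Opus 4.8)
\textbf{Proof strategy for Lemma \ref{hardytype}.}

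The plan is to reduce the inequality to the classical Hardy inequality $\int_\Omega |\nabla \psi|^2\,dx \geq \mu^* \int_\Omega \psi^2/|x|^2\,dx$ for $\psi\in H^1_0(\Omega)$ by absorbing the Gaussian weight $w(x) := e^{-|x-x_0|^2/(8\lambda)}$ (note the factor $8$, the square root of the weight appearing in \eqref{pei3}) into the test function. Concretely, I would set $\psi := \varphi\, w$, so that $\psi\in H^1_0(\Omega)$ whenever $\varphi\in H^1_0(\Omega)$, and compute $\nabla \psi = w\,\nabla\varphi + \varphi\,\nabla w$ with $\nabla w = -\frac{x-x_0}{4\lambda} w$. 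Expanding $|\nabla\psi|^2$ gives three terms:
\begin{equation*}
|\nabla\psi|^2 = w^2|\nabla\varphi|^2 - \frac{(x-x_0)\cdot\nabla(\varphi^2)}{4\lambda} w^2 + \frac{|x-x_0|^2}{16\lambda^2}\varphi^2 w^2.
\end{equation*}
The first term is exactly $|\nabla\varphi|^2 e^{-|x-x_0|^2/(4\lambda)}$, and the last term is exactly the left-hand side integrand of \eqref{pei3} up to the prefactor $1/(16\lambda^2)$. So after integrating, the classical Hardy inequality $\int_\Omega |\nabla\psi|^2\,dx \geq \mu^*\int_\Omega \psi^2/|x|^2\,dx = \mu^*\int_\Omega \varphi^2 w^2/|x|^2\,dx$ rearranges into the desired estimate, provided I can control the cross term $-\frac{1}{4\lambda}\int_\Omega (x-x_0)\cdot\nabla(\varphi^2)\, w^2\,dx$.

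The cross term is handled by integration by parts (there is no boundary contribution since $\varphi\in H^1_0(\Omega)$): using $\mathrm{div}\big((x-x_0)w^2\big) = n\,w^2 + (x-x_0)\cdot\nabla(w^2) = n\,w^2 - \frac{|x-x_0|^2}{4\lambda}w^2$, one gets
\begin{equation*}
-\frac{1}{4\lambda}\int_\Omega (x-x_0)\cdot\nabla(\varphi^2)\,w^2\,dx = \frac{n}{4\lambda}\int_\Omega \varphi^2 w^2\,dx - \frac{1}{16\lambda^2}\int_\Omega |x-x_0|^2\varphi^2 w^2\,dx.
\end{equation*}
Substituting this back, the $-\frac{1}{16\lambda^2}|x-x_0|^2\varphi^2 w^2$ piece from the cross term exactly cancels the $+\frac{1}{16\lambda^2}|x-x_0|^2\varphi^2 w^2$ piece from $|\nabla w|^2$, and what survives is precisely
\begin{equation*}
\int_\Omega |\nabla\psi|^2\,dx = \int_\Omega |\nabla\varphi|^2 w^2\,dx + \frac{n}{4\lambda}\int_\Omega \varphi^2 w^2\,dx - \frac{1}{16\lambda^2}\int_\Omega |x-x_0|^2\varphi^2 w^2\,dx.
\end{equation*}
Combining with $\int_\Omega|\nabla\psi|^2\,dx \geq \mu^*\int_\Omega \varphi^2 w^2/|x|^2\,dx$ and noting $w^2 = e^{-|x-x_0|^2/(4\lambda)}$ yields \eqref{pei3} after moving terms across. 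I should double-check the elementary fact that $\varphi w \in H^1_0(\Omega)$ — this is immediate because $w$ is smooth and bounded on $\Omega$ together with all its derivatives, so multiplication by $w$ preserves $H^1_0$.

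The only genuine subtlety, and the step I expect to need the most care, is the legitimacy of the integration by parts and of applying the Hardy inequality when $x_0$ may coincide with or lie near the origin: the Hardy inequality is applied with singular weight $1/|x|^2$ at the origin, and $\psi = \varphi w$ must have finite Hardy energy. This is fine because the classical Hardy inequality holds for all of $H^1_0(\Omega)$ with $0\in\Omega$ (no decay assumption at the origin is needed — the inequality itself guarantees $\int_\Omega \psi^2/|x|^2 < \infty$), and $\varphi w \in H^1_0(\Omega)$. If one wants to be fully rigorous about the integration by parts with the weight, one first proves the identity for $\varphi \in C_c^\infty(\Omega)$ and then passes to the limit using density of $C_c^\infty(\Omega)$ in $H^1_0(\Omega)$ together with Hardy's inequality to control the singular term in the limit. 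Everything else is routine calculus.
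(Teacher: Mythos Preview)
Your approach is exactly the paper's: set $g=\varphi\,e^{-|x-x_0|^2/(8\lambda)}$, apply the classical Hardy inequality to $g$, expand $|\nabla g|^2$, and handle the cross term by integration by parts. One arithmetic slip to fix: in your displayed identity for the cross term, $\mathrm{div}\big((x-x_0)w^2\big)=n\,w^2-\frac{|x-x_0|^2}{2\lambda}w^2$ (not $-\frac{|x-x_0|^2}{4\lambda}w^2$), so the cross term contributes $-\tfrac{1}{8\lambda^2}\int|x-x_0|^2\varphi^2w^2$; combined with the $+\tfrac{1}{16\lambda^2}$ piece from $|\nabla w|^2\varphi^2$ this gives the correct net $-\tfrac{1}{16\lambda^2}$ in your final formula, which is right --- only the intermediate ``exactly cancels'' description is off.
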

\begin{proof}
Recall the well-known Hardy inequality (cf., e.g., \cite{VZ}):
\begin{equation}\label{pei1}
\mu^*\int_{\Omega}\frac{g^2}{|x|^2}\,dx\leq \int_{\Omega}|\nabla
g|^2\,dx\;\;\mbox{for all}\; \; g\in H^1_0(\Omega).
\end{equation}
Given $\varphi\in H^1_0(\Omega)$, $\lambda>0$ and $x_0\in \Omega$,  let
$g=\varphi e^{-\frac{|x-x_0|^2}{8\l}}$. Clearly, $g\in H_0^1(\Omega)$ and
\begin{equation*}
\nabla g=\nabla \varphi e^{-\frac{|x-x_0|^2}{8\l}}-\frac{x-x_0}{4\l}
\varphi e^{-\frac{|x-x_0|^2}{8\l}}.
\end{equation*}
By \eqref{pei1},
\begin{equation*}\label{pei2}
\begin{split}
\mu^*\int_{\Omega}\frac{\varphi^2}{|x|^2}e^{-\frac{|x-x_0|^2}{4\l}}\,dx
&\leq \int_{\Omega}|\nabla \varphi|^2 e^{-\frac{|x-x_0|^2}{4\l}}
+\frac{|x-x_0|^2}{16\l^2}\varphi^2e^{-\frac{|x-x_0|^2}{4\l}}\,dx\\
&\;\;\;\;-\frac{1}{2\l}\int_{\Omega}(x-x_0)\cdot\nabla \varphi \varphi
e^{-\frac{|x-x_0|^2}{4\l}}\,dx.
\end{split}
\end{equation*}
By the integration  by  parts, we have
\begin{equation}\label{pei4}
\int_{\Omega}(x-x_0)\cdot\nabla \varphi \varphi
e^{-\frac{|x-x_0|^2}{4\l}}\,dx
=-\frac{1}{2}\int_{\Omega}\Big(n-\frac{|x-x_0|^2}{2\l}\Big)\varphi^2
e^{-\frac{|x-x_0|^2}{4\l}}\,dx.
\end{equation}
The last two inequalities imply \eqref{pei3}. 
\end{proof}

\begin{lemma}\label{morepei}
Let $x_0\in \Omega$ and $\mu^*$ be as above. Then for each
$m\geq0$ and each  $\gamma\in(0,2)$, there exists a constant
$C=C(m,\gamma)>0$ such that when $\lambda>0$ and $ \varphi\in
H_0^1(\Omega)$,
\begin{equation}\label{wang2.14}
\begin{split}
\frac{1}{16\l^2}\int_{\Omega}|x-x_0|^2
\varphi^2e^{-\frac{|x-x_0|^2}{4\l}}\,dx & \leq
\int_{\Omega}\Big[|\nabla
\varphi|^2-\frac{\mu^*}{|x|^2}\varphi^2-\frac{m}{|x|^\gamma}\varphi^2\Big]
e^{-\frac{|x-x_0|^2}{4\lambda}}\,dx\\
&\;\;\;\;+\Big[\frac{n}{4\lambda}+C\Big]\int_{\Omega}\varphi^2
e^{-\frac{|x-x_0|^2}{4\lambda}}\,dx.
\end{split}
\end{equation}
\end{lemma}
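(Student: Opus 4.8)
The plan is to obtain \eqref{wang2.14} from an improved elliptic Hardy inequality accommodating the Coulomb-type term, and then to repeat verbatim the weighting argument of Lemma \ref{hardytype}. Explicitly, I would first prove that there is $C=C(m,\gamma)>0$ with
\begin{equation*}
\mu^*\int_\Omega\frac{g^2}{|x|^2}\,dx+m\int_\Omega\frac{g^2}{|x|^\gamma}\,dx\leq\int_\Omega|\nabla g|^2\,dx+C\int_\Omega g^2\,dx\qquad\text{for all }g\in H^1_0(\Omega).
\tag{IH}
\end{equation*}
Granting (IH), the lemma follows exactly as Lemma \ref{hardytype} follows from \eqref{pei1}: apply (IH) to $g=\varphi\,e^{-|x-x_0|^2/(8\lambda)}\in H^1_0(\Omega)$, expand $\nabla g=e^{-|x-x_0|^2/(8\lambda)}\bigl(\nabla\varphi-\tfrac{x-x_0}{4\lambda}\,\varphi\bigr)$, and invoke the integration by parts \eqref{pei4}; since $g^2=\varphi^2 e^{-|x-x_0|^2/(4\lambda)}$, the term $m\int_\Omega g^2/|x|^\gamma\,dx$ produces exactly the new term in \eqref{wang2.14} and $C\int_\Omega g^2\,dx$ produces the new $C$-term there, everything else being unchanged.

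The content is therefore (IH), and its main difficulty is that \eqref{pei1} is sharp, so one cannot absorb the Coulomb term by spending a fraction of $\mu^*$ — the naive bound $|x|^{-\gamma}\le\delta|x|^{-2}+C_\delta$ would require a Hardy inequality with constant $\mu^*+m\delta$, which is false on any domain. The gain must instead be extracted from the Hardy remainder, so I would pass to the ground-state variable $v:=|x|^{(n-2)/2}g$ (legitimate first for $g\in C_c^\infty(\Omega\setminus\{0\})$, then for all $g\in H^1_0(\Omega)$ by density, points having zero $H^1$-capacity when $n\ge3$). Using the classical identity
\begin{equation*}
\int_\Omega|\nabla g|^2\,dx-\mu^*\int_\Omega\frac{g^2}{|x|^2}\,dx=\int_\Omega|x|^{2-n}|\nabla v|^2\,dx
\end{equation*}
together with $\int_\Omega g^2/|x|^\gamma\,dx=\int_\Omega|x|^{2-n-\gamma}v^2\,dx$ and $\int_\Omega g^2\,dx=\int_\Omega|x|^{2-n}v^2\,dx$, inequality (IH) becomes the weighted estimate
\begin{equation*}
m\int_\Omega|x|^{2-n-\gamma}v^2\,dx\leq\int_\Omega|x|^{2-n}|\nabla v|^2\,dx+C\int_\Omega|x|^{2-n}v^2\,dx.
\tag{$\star$}
\end{equation*}

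To prove $(\star)$ I would integrate by parts against the vector field $|x|^{2-n-\gamma}x$, whose divergence $(2-\gamma)|x|^{2-n-\gamma}$ is strictly positive precisely because $\gamma<2$, and then use Cauchy--Schwarz/Young with the weight $|x|^{2-n}$ on the gradient factor:
\begin{equation*}
(2-\gamma)\int_\Omega|x|^{2-n-\gamma}v^2\,dx=-2\int_\Omega|x|^{2-n-\gamma}(x\cdot\nabla v)\,v\,dx\leq 2\int_\Omega|x|^{3-n-\gamma}|\nabla v|\,|v|\,dx,
\end{equation*}
which bounds $m\int_\Omega|x|^{2-n-\gamma}v^2\,dx$ by $\int_\Omega|x|^{2-n}|\nabla v|^2\,dx$ plus a multiple of $\int_\Omega|x|^{2-n-(2\gamma-2)}v^2\,dx$ — a term of the same shape with $\gamma$ replaced by $2\gamma-2<\gamma$. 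Iterating this finitely many times pushes the exponent below $0$, after which the remaining term is a bounded potential and is absorbed into $C\int_\Omega|x|^{2-n}v^2\,dx$; arranging the Young parameters so the accumulated gradient coefficient stays $<1$ closes $(\star)$. (Equivalently, and to see transparently that $C$ can be taken to depend only on $m,\gamma$, one may restrict to radial $v$ — angular modes only add nonnegative terms — reduce $(\star)$ via $r=e^{t}$ to the positivity of the one-dimensional Schr\"odinger operator $-\partial_t^2+\bigl(Ce^{2t}-me^{(2-\gamma)t}\bigr)$, and observe that the well of this potential has depth tending to $0$ as $C\to\infty$ while its width stays bounded, so it is nonnegative once $C=C(m,\gamma)$ is large enough.) I expect the analysis of this iteration / positivity to be the only real obstacle; the justification of the ground-state identity and of the boundary integrals over $\partial B_\varepsilon$ as $\varepsilon\to0$ (each bounded by $\varepsilon^{2-\gamma}$ or a positive power of $\varepsilon$) is routine.
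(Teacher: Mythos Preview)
Your approach is essentially identical to the paper's: both apply an improved Hardy inequality---your (IH) is precisely the inequality the paper quotes from \cite[(2.15)]{VZ}---to $g=\varphi\,e^{-|x-x_0|^2/(8\lambda)}$ and then expand $|\nabla g|^2$ and use the integration-by-parts identity \eqref{pei4}, exactly as in Lemma~\ref{hardytype}. The only difference is that you supply a self-contained proof of (IH) via the ground-state substitution $v=|x|^{(n-2)/2}g$ and an iteration on the exponent, whereas the paper simply cites it; note only that your iteration, as written, terminates by bounding $|x|^{|\gamma_k|}$ on $\Omega$ and hence gives $C=C(m,\gamma,\operatorname{diam}\Omega)$ rather than $C=C(m,\gamma)$---harmless for every application in the paper.
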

\begin{proof}
Given $x_0\in \Omega$,  $\lambda>0$ and $\varphi\in H^1_0(\Omega)$,
let $z=\varphi e^{-\frac{|x-x_0|^2}{8\lambda}}$. Then,
\begin{equation}\label{1551}
\nabla z=\nabla \varphi
e^{-\frac{|x-x_0|^2}{8\lambda}}-\frac{(x-x_0)}{4\lambda} \varphi
e^{-\frac{|x-x_0|^2}{8\lambda}}
\end{equation}
and
\begin{equation}\label{1552}
|\nabla z|^2=|\nabla \varphi|^2
e^{-\frac{|x-x_0|^2}{4\lambda}}+\frac{|x-x_0|^2}{16\lambda^2}\varphi^2
e^{-\frac{|x-x_0|^2}{4\lambda}} -\frac{(x-x_0)\cdot\nabla
\varphi}{2\lambda}\varphi e^{-\frac{|x-x_0|^2}{4\lambda}}.
\end{equation}
This, together with \eqref{pei4}, \eqref{1551}, \eqref{1552}, and the following improved version of Hardy inequality
(cf., for instance, \cite[(2.15)]{VZ})
\begin{equation*}
m\int_{\Omega}\frac{z^2}{|x|^\gamma}\,dx
\leq \int_{\Omega}\Big[|\nabla z|^2-\frac{\mu^*}{|x|^2}z^2\Big]\,dx
+C(m,\gamma)\int_{\Omega}z^2\,dx,\;\;\;\forall z\in H_0^1(\Omega),
\end{equation*}
leads to (\ref{wang2.14}).
\end{proof}

\begin{remark}
Note that in the proofs of Lemmas \ref{hardytype} and \ref{morepei} we do not 
use the convexity of the domain $\Omega$. In fact, they are still valid whenever $\Omega$  is a bounded 
Lipschitz domain in $\mathbb R^n$, $n\geq3$.

\end{remark}

\subsection{The proof of Theorem~\ref{g-t}}
We first  apply the above-mentioned three lemmas to obtain the
following  weighted estimate for solutions to  Equation \eqref{6171}.

\begin{lemma}\label{peiweightestimate}
Assume the bounded domain $\Omega$ is convex. Let $x_0\in\Omega$ and let $0<\lambda\leq T\leq 1$.
 Then for each $r>0$ with $B_r(x_0)\subset\Omega$ and each solution $u$ to Equation~\eqref{6171}, it holds that
\begin{multline}\label{pei0}
\int_{\Omega}|u(x,T)|^2e^{-\frac{|x-x_0|^2}{4\l}}\,dx
\leq \int_{B_r(x_0)}|u(x,T)|^2e^{-\frac{|x-x_0|^2}{4\l}}\,dx\\
+\frac{64\l}{r^2}e^{\frac{k^2}{\mu^*}}
\left[\log
\frac{\int_\Omega |u(x,0)|^2\,dx}{\int_\Omega |u(x,T)|^2\,dx}+
\mathcal{C}_1\right]
\int_{\Omega}|u(x,T)|^2
e^{-\frac{|x-x_0|^2}{4\l}}\,dx,
\end{multline}
where
\begin{equation*}\label{2911}
\mathcal{C}_1=\mathcal{C}_1(T,k,n,\mu^*,R_\Omega)=\frac{R_\Omega^2}{2T}+C(k)
+\frac{n}{4}\big(1+\frac{k^2}{\mu^*}\big)+n.
\end{equation*}\end{lemma}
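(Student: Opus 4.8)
The plan is to run the weighted frequency-function machinery of Lemma~\ref{monotonicity} with forcing term $f=\tfrac{k}{|x|}u$. First observe that $f\in L^2(\Omega\times(0,T))$ by the Hardy inequality \eqref{pei1}, and that testing \eqref{6171} by $u$ and combining \eqref{pei1} with Young's inequality yields the crude energy bound $\int_\Omega|u(x,t)|^2\,dx\le e^{k^2/\mu^*}\int_\Omega|u(x,0)|^2\,dx$ for $0\le t\le T\le 1$, hence also $\int_\Omega|u(x,T/2)|^2\,dx\ge e^{-k^2/(2\mu^*)}\int_\Omega|u(x,T)|^2\,dx$. We may assume $u\not\equiv0$, so that $H_\lambda(t)>0$ on $(0,T]$.

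I would first dispose of the geometry. Splitting the weighted integral over $B_r(x_0)$ and $\Omega\setminus B_r(x_0)$ and using $1\le|x-x_0|^2/r^2$ on the complement, it suffices to estimate $\tfrac1{r^2}\int_\Omega|x-x_0|^2|u(\cdot,T)|^2e^{-|x-x_0|^2/(4\lambda)}\,dx$. Applying Lemma~\ref{hardytype} (or its refinement Lemma~\ref{morepei}) with $\varphi=u(\cdot,T)$ and parameter $\lambda$, and discarding the nonpositive Hardy-defect term $-\mu^*|x|^{-2}\varphi^2$ on its right-hand side, one obtains after multiplication by $\lambda^{-n/2}$
\begin{equation*}
\int_{\Omega\setminus B_r(x_0)}|u(\cdot,T)|^2e^{-\frac{|x-x_0|^2}{4\lambda}}\,dx\le\frac{16\lambda^2}{r^2}\Big(N_\lambda(T)+\frac{n}{4\lambda}\Big)\int_\Omega|u(\cdot,T)|^2e^{-\frac{|x-x_0|^2}{4\lambda}}\,dx ,
\end{equation*}
so the whole matter reduces to bounding $\lambda N_\lambda(T)+\tfrac n4$ from above by $e^{k^2/\mu^*}$ times a logarithm plus a constant.

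Here the singular potential is handled via the parabolic Hardy inequality. In the frequency identity the Coulomb term enters only through $\int_\Omega f^2G_{\lambda,x_0}=k^2\int_\Omega|x|^{-2}u^2G_{\lambda,x_0}$, and the weighted Hardy inequality built into the proof of Lemma~\ref{hardytype}, namely $\mu^*\int_\Omega|x|^{-2}u^2G_{\lambda,x_0}\le I_\lambda(t)+\tfrac{n}{4(T-t+\lambda)}H_\lambda(t)$, bounds it by $\tfrac{k^2}{\mu^*}\big(N_\lambda(t)+\tfrac{n}{4(T-t+\lambda)}\big)H_\lambda(t)$. Inserting this into Lemma~\ref{monotonicity}(ii) and multiplying by $T-t+\lambda$ gives $\tfrac{d}{dt}\big[(T-t+\lambda)N_\lambda(t)+\tfrac n4\big]\le\tfrac{k^2}{\mu^*}\big[(T-t+\lambda)N_\lambda(t)+\tfrac n4\big]$, so $s\mapsto\big[(T-s+\lambda)N_\lambda(s)+\tfrac n4\big]e^{-k^2s/\mu^*}$ is non-increasing on $(0,T]$ and thus $\lambda N_\lambda(T)+\tfrac n4\le e^{k^2/\mu^*}\big[(T-s+\lambda)N_\lambda(s)+\tfrac n4\big]$ for every $s\in(0,T]$. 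Integrating in $s$ over $(0,T/2)$ --- a time window kept away from $t=T$, which is the decisive trick --- and using $T-s+\lambda\le 2T$, $\lambda\le T$, one gets $\lambda N_\lambda(T)+\tfrac n4\le e^{k^2/\mu^*}\big[4\int_0^{T/2}N_\lambda(s)\,ds+\tfrac n4\big]$; by Lemma~\ref{monotonicity}(i), $\int_0^{T/2}N_\lambda(s)\,ds=\tfrac12\log\tfrac{H_\lambda(0)}{H_\lambda(T/2)}+\int_0^{T/2}\tfrac{\int_\Omega ufG_{\lambda,x_0}}{H_\lambda}\,ds$, and the last integral is absorbed by Cauchy--Schwarz, the same weighted Hardy bound and Young's inequality (using $\int_0^{T/2}\tfrac{ds}{T-s+\lambda}\le\log 2$). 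It remains to pass to the unweighted norms: $H_\lambda(0)\le(T+\lambda)^{-n/2}\int_\Omega|u(\cdot,0)|^2$ while, because $T/2+\lambda\ge T/2$, $H_\lambda(T/2)\ge(T/2+\lambda)^{-n/2}e^{-R_\Omega^2/(2T)}\int_\Omega|u(\cdot,T/2)|^2$; together with the two energy estimates above this gives $\log\tfrac{H_\lambda(0)}{H_\lambda(T/2)}\le\log\tfrac{\int_\Omega|u(\cdot,0)|^2}{\int_\Omega|u(\cdot,T)|^2}+\tfrac{R_\Omega^2}{2T}+\tfrac{k^2}{2\mu^*}$, and absorbing all numerical and $k$-dependent constants into $\mathcal C_1$ yields \eqref{pei0}.

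The step I expect to be the real obstacle is the control of the Coulomb contribution $k^2\int_\Omega|x|^{-2}u^2G_{\lambda,x_0}/H_\lambda$ in the frequency monotonicity: this term is genuinely unbounded, so --- unlike for bounded or $L^q$-potentials --- it cannot be treated as a lower-order perturbation, and the whole scheme works only because the parabolic Hardy inequality reabsorbs it into $N_\lambda$ while preserving the differential inequality $\tfrac{d}{dt}[(T-t+\lambda)N_\lambda]\le\tfrac{k^2}{\mu^*}[(T-t+\lambda)N_\lambda+\tfrac n4]$ that drives the Gronwall argument. A secondary, bookkeeping-type difficulty is the conversion of the Gaussian-weighted quantities $H_\lambda(0),H_\lambda(T/2)$ into the unweighted norms appearing in \eqref{pei0}: this is exactly why the time integration must be performed over an interval bounded away from $T$, so that the factor $\lambda^{-1}$ produced by the concentration of $G_{\lambda,x_0}$ near $x_0$ is replaced by the harmless $R_\Omega^2/(2T)$.
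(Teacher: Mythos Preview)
Your proof is correct and follows essentially the same route as the paper: absorb the Coulomb contribution into $N_\lambda$ via the parabolic Hardy inequality (Lemma~\ref{hardytype}), run Gronwall on $(T-t+\lambda)N_\lambda(t)$, integrate in $t$ over $(0,T/2)$, and convert $H_\lambda(0)/H_\lambda(T/2)$ to unweighted norms using the forward energy estimate. The only cosmetic differences are that you re-derive the starting inequality \eqref{pei5} from Lemma~\ref{hardytype} instead of citing \cite[Lemma~2.5]{pw2}, and you bound the cross term $\int_\Omega ufG_{\lambda,x_0}/H_\lambda$ via Cauchy--Schwarz, weighted Hardy and a suitably tuned Young inequality, whereas the paper invokes Lemma~\ref{morepei} with $\gamma=1$ directly; both yield the same estimate up to constants absorbed in $\mathcal C_1$.
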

\medskip
Here and in the sequel,  
$R_\Omega$ is the diameter of $\Omega$, and  $C(\cdot)$ denotes a positive constant depending only on what are enclosed in the brackets and it may change from line to line in the context.

\begin{proof}
 We only need to show the desired estimate \eqref{pei0} for an arbitrarily fixed solution  $u$  to
Equation \eqref{6171} with $u(\cdot, 0)\neq 0$ in $L^2(\Omega)$. Let
$N_\lambda(\cdot)$ be the weighted frequency function associated with $(u,
f, x_0)$  where $f=ku/|x|$. It follows from Lemma 2.5 in \cite{pw2}
that
\begin{multline}\label{pei5}
\int_\Omega|u(x,T)|^2
e^{-\frac{|x-x_0|^2}
{4\l}}\,dx\leq\int_{B_r(x_0)}|u(x,T)|^2e^{-\frac{|x-x_0|^2}{4\l}}\,dx
\\
+\frac{16\l}{r^2}
\Big[\l N_\l(T)+\frac n4\Big]
\int_\Omega|u(x,T)|^2
e^{-\frac{|x-x_0|^2}
{4\l}}\,dx.
\end{multline}

Next,
we apply Lemmas~\ref{monotonicity}, \ref{hardytype} and \ref{morepei} to deduce a bound for
the quantity $\l N_\l(T)$ when $0<\l\leq T\leq1$. Since $u(\cdot, 0)\neq 0$ in
$L^2(\Omega)$, by backward uniqueness it holds that $u(\cdot, t)\neq 0$ in $L^2(\Omega)$,
for each $t\in [0,T]$.
 From  (ii) of
Lemma~\ref{monotonicity} (where  $(u,f)=(u, ku/|x|)$), we have
\begin{equation}\label{Wang2.20}
\frac{d}{dt}N_\l(t)\leq \frac{N_\l(t)}{T-t+\l}
+\frac{k^2\int_{\Omega}\frac{u^2}{|x|^2}G_{\l,x_0}\,dx}{H_\l(t)}\;\;\;\mbox{for
each}\;\; t\in (0,T].
\end{equation}
By  Lemma~\ref{hardytype} (where $\varphi=u$ and $\l=T-t+\l$ with
$t\in (0,T]$), it follows that
\begin{equation*}
k^2\int_\Omega\frac{u^2}{|x|^2}G_{\l.x_0}dx\leq
\frac{k^2}{\mu^*}\int_\Omega |\nabla
u|^2G_{\l,x_0}dx+\frac{k^2n}{4\mu^*(T-t+\l)}\int_\Omega
u^2G_{\l,x_0}dx,\; t\in (0,T].
\end{equation*}
This, along with (\ref{Wang2.20}), indicates that
\begin{equation*}\label{21-2}
\frac{d}{dt}N_\l(t)\leq
\frac{N_\l(t)}{T-t+\lambda}+\frac{k^2}{\mu^*}
N_\lambda(t)+\frac{nk^2}{4\mu^*(T-t+\lambda)},\; t\in (0,T].
\end{equation*}
Consequently,
\begin{equation*}
\frac{d}{dt}\big[(T-t+\lambda)N_\l(t)\big]\leq
\frac{k^2}{\mu^*}(T-t+\l)N_\l(t)+\frac{nk^2}{4\mu^*}, \; t\in (0,T].
\end{equation*}
This yields
\begin{equation*}\label{295}
\frac{d}{dt}\Big[e^{-tk^2/\mu^*}(T-t+\lambda)N_\lambda(t)\Big]
\leq \frac{nk^2}{4\mu^*}e^{-tk^2/\mu^*},\;t\in(0,T].
\end{equation*}
Integrating the above inequality from $t$ to $T$, we get that
\begin{equation*}
e^{-Tk^2/\mu^*}\lambda N_\lambda(T)\leq e^{-tk^2/\mu^*}(T-t+\lambda)N_\lambda(t)
+\frac{nk^2}{4\mu^*}\int_t^Te^{-sk^2/\mu^*}\,ds,\,t\in(0,T].
\end{equation*}
Thus,
\begin{equation*}\label{297}
e^{-Tk^2/\mu^*}\lambda N_\lambda(T)\leq (T+\lambda)N_\lambda(t)
+\frac{nk^2}{4\mu^*},\,t\in(0,T].
\end{equation*}
Integrating  the above inequality with respect to $t$ over $(0,T/2)$, we obtain that
\begin{equation}\label{pei11}
e^{-Tk^2/\mu^*}\l N_\l(T)\leq \frac{2(T+\l)}{T}
\int_{0}^{T/2}N_\l(t)\,dt+\frac{nk^2}{4\mu^*}.
\end{equation}

On the other hand, by (i) of Lemma~\ref{monotonicity} (where $(u,f)=
(u,ku/|x|)$), we see  that
\begin{equation}\label{pei7}
N_\l(t)=-\frac{1}{2}\frac{d}{dt}\log \big(H_\l(t)\big)+\frac{\int_{\Omega}\frac{k}{|x|}u^2G_{\l,x_0}\,dx}
{H_\l(t)}.
\end{equation}
By Lemma~\ref{morepei} (where $\varphi=u$, $m=2|k|$, $\gamma=1$ and $\lambda=T-t+\lambda$), it stands that
\begin{equation*}\label{21-1}
2|k|\int_{\Omega}\frac{u^2}{|x|}
G_{\l,x_0}\,dx
\leq \int_{\Omega}|\nabla u|^2G_{\l,x_0}\,dx
+\Big[\frac{n}{4(T-t+\lambda)}+C(k)\Big]\int_{\Omega}u^2
G_{\l,x_0}\,dx.
\end{equation*}
This, along with \eqref{pei7}, implies that
\begin{equation*}
N_\l(t)\leq -\frac{d}{dt}\log \big(H_\l(t)\big)+
\Big[C(k)+\frac{n}{4(T-t+\l)}
\Big].
\end{equation*}
%%%%%%%%%%%%%%%%%%%%%%%%%%%%%%%%%%%%%%%%%%%%%%%%%%%%%%
Integrating the above inequality over $(0,T/2)$, we get that
\begin{equation}\label{pei9}
\int_{0}^{T/2}N_\l(t)\,dt\leq\log \frac{H_\l(0)}{H_\l(T/2)}+\frac{T}{2}\Big[C(k)+\frac{n}{2T}
\Big].
\end{equation}
Notice that
\begin{equation*}\label{2913}
\begin{split}
\frac{H_\l(0)}{H_\l(T/2)}&=\frac{(T+\l)^{-n/2}\int_\Omega |u(x,0)|^2e^{-\frac{|x-x_0|^2}{4(T+\l)}}\,dx}
{(\frac{T}{2}+\l)^{- n/2}\int_\Omega |u(x,T/2)|^2e^{-\frac{|x-x_0|^2}{4(\frac{T}{2}+\l)}}\,dx}\\
&\leq e^{\frac{R_\Omega^2}{2T}}\frac{\int_\Omega |u(x,0)|^2\,dx}{\int_\Omega |u(x,T/2)|^2\,dx}.
\end{split}
\end{equation*}
From which and the standard  energy estimate
\begin{equation*}\label{2914}
\int_\Omega|u(x,T)|^2\,dx\leq
e^{C(k)T}\int_\Omega |u(x,T/2)|^2\,dx,
\end{equation*}
we have
\begin{equation}\label{pei10}
\frac{H_\l(0)}{H_\l(T/2)}\leq e^{\frac{R_\Omega^2}{2T}+C(k)}\frac{\int_\Omega |u(x,0)|^2\,dx}{\int_\Omega |u(x,T)|^2\,dx}.
\end{equation}
Therefore, it follows from \eqref{pei11}, \eqref{pei9} and  \eqref{pei10} that when $0<\l\leq T\leq1$
\begin{equation*}\label{pei12}
\begin{split}
\l N_\l(T)&\leq
4e^{\frac{k^2}{\mu^*}}
\left[\log \frac{\int_\Omega |u(x,0)|^2\,dx}{\int_\Omega |u(x,T)|^2\,dx}+
\frac{R_\Omega^2}{2T}+C(k)+\frac{n}{4}\big(1+\frac{k^2}
{\mu^*}\big)\right].
\end{split}
\end{equation*}
This, combined with \eqref{pei5}, arrives at the desired estimate
\eqref{pei0}.
\end{proof}

\bigskip

We end this section with  the proof of Theorem~\ref{g-t}.
\begin{proof}[The proof of Theorem~\ref{g-t}]

Let us now choose
\begin{equation*}\label{1127-1-c}
\l_0=\frac{r^2}{128}e^{-\frac{k^2}{\mu^*}}
\left[\log \frac{\int_\Omega |u(x,0)|^2\,dx}{\int_\Omega |u(x,T)|^2\,dx}+\mathcal{C}_1\right]^{-1}
\end{equation*}
with the same constant $\mathcal C_1$ given in Lemma \ref{peiweightestimate}.
It is easy to check that  $0<\l_0<T$.
According to Lemma~\ref{peiweightestimate} (where $\lambda=\lambda_0$), it holds that
\begin{equation*}\label{1123-2-c}
\begin{split}
&\frac{1}{2}\int_\Omega|u(x,T)|^2
e^{-\frac{|x-x_0|^2}
{4\l_0}}\,dx
\leq\int_{B_r(x_0)}|u(x,T)|^2e^{-\frac{|x-x_0|^2}{4\l_0}}\,dx.\\
\end{split}
\end{equation*}
Which implies 
\begin{equation}\label{1117-20-c}
\int_\Omega|u(x,T)|^2\,dx\leq
2e^{\frac{R_\Omega^2}{4\l_0}}\int_{B_r(x_0)}\!\!|u(x,T)|^2\,dx.
\end{equation}
Notice that
\begin{equation*}
\begin{split}
e^{\frac{R_\Omega^2}{4\l_0}}&=e^{\frac{32R_\Omega^2e^{\frac{k^2}{\mu^*}}}{r^2}\left[\log \frac{\int_\Omega |u(x,0)|^2\,dx}{\int_\Omega |u(x,T)|^2\,dx}+\mathcal{C}_1\right]}=\left(e^{\mathcal{C}_1}
\frac{\int_\Omega |u(x,0)|^2\,dx}{\int_\Omega |u(x,T)|^2\,dx}\right)
^{\frac{32R_\Omega^2e^{\frac{k^2}{\mu^*}}}{r^2}}.
\end{split}
\end{equation*}
This, along with \eqref{1117-20-c}, leads to
\begin{equation*}
\begin{split}
&\int_\Omega|u(x,T)|^2
\,dx\leq 2\left(e^{\mathcal{C}_1}\frac{\int_\Omega |u(x,0)|^2\,dx}{\int_\Omega |u(x,T)|^2\,dx}\right)
^{\frac{32R_\Omega^2e^{\frac{k^2}{\mu^*}}
}{r^2}}\int_{B_r(x_0)}\!\!|u(x,T)|^2\,dx.
\end{split}
\end{equation*}
Which implies \eqref{1127-6-c} and completes the proof.
\end{proof}

\begin{remark}
By following the argument in \cite{EFV} and the facts established above,  one can easily obtain  the following doubling property:  For any $u(\cdot,0)\in L^2(\Omega)$, there exists a constant $C$,  independent of $r$ and $x_0$, such that 
\begin{equation*}\label{1123-11-c}
\begin{split}
&\int_{B_{2r}(x_0)} |u(x,T)|^2\,dx\leq  C
\int_{B_r(x_0)}|u(x,T)|^2\,dx,  \\
\end{split}
\end{equation*}
for all $r>0$ such that $B_{2r}(x_0)\subset\Omega$. From which one can derive the space-like strong unique continuation property for solutions to \eqref{6171}.
\end{remark}

\bigskip

\section{Local frequency function method: Proof of Theorem~\ref{global}}\label{sec3}

We sketch  the main idea of the proof of Theorem~\ref{global} as follows. 
First, we apply the frequency function method (as in Section \ref{sec2}) in a star-shaped sub-domain to deduce a localized version of \eqref{6172}, which means that the left hand side of  \eqref{6172} is replaced by the local energy of the solution at the time $T$. Then,
by iterating the above-mentioned localized version and the standard argument of propagation of smallness, as well as a finite covering argument, we can conclude the desired inequality \eqref{6172} when $\Omega$ is a bounded and $C^2$-smooth domain. Noting that any bounded and $C^2$-smooth domain can be covered by finite numbers of star-shaped domains (see, e.g., \cite[Theorem 8]{AEWZ}),

\subsection{Backward estimates}
We start with a version of locally backward energy estimate for solutions to Equation~\eqref{6171}, which is similar to \cite[Lemma 3]{PWZ}.
\begin{lemma}\label{backwardenergyestimate}
Let $x_0\in\Omega$, $0<T\leq 1$, $R\in(0,1]$ and $\delta\in(0,1]$. Then
there exist constants $C(k)$ and $C_1(k)$ such that 
 for any solution $u$ of Equation \eqref{6171} with  $u(\cdot,0)=u_0\in L^2(\Omega)\setminus\{0\}$, 
\begin{equation}\label{ccan4}
\|u_0\|^2_{L^2(\Omega)}
\leq C(k)e^{\frac{1}{h_0}} \|u(\cdot,t)\|^2_{L^2(\Omega\cap B_{(1+\delta)R}(x_0))},\;\;\text{when}\;\;
t\in[T-h_0,T]
\end{equation}
with $h_0$ given by the equality
\begin{equation}\label{ch}
h_0=\frac{\delta^3R^2}{8(1+\delta)^2\log\Big[
\frac{C_1(k)}{\delta^2R^2}e^{\frac{1}{T}}\frac
{\|u_0\|^2_{L^2(\Omega)}}{\|u(\cdot,T)\|^2_{L^2(\Omega\cap B_R(x_0))}}\Big]}\,.
\end{equation}
\end{lemma}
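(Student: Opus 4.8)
The strategy is to combine a weighted energy (frequency-type) estimate à la Lemma~\ref{peiweightestimate} with a backward-in-time energy inequality, exactly as in \cite[Lemma~3]{PWZ}, but keeping track of the singular potential $k/|x|$ through the parabolic Hardy inequalities of Section~\ref{ph}. First I would note that, since $\Omega$ is $C^2$-smooth and $B_R(x_0)$ is a ball, one may choose (after a harmless translation) $x_0$ so that $\Omega\cap B_{(1+\delta)R}(x_0)$ contains $\Omega\cap B_R(x_0)$ and is contained in a star-shaped sub-domain with respect to some interior point; this makes the boundary term $\mathcal A$ in Lemma~\ref{monotonicity}(ii) nonnegative when we localize, so the monotonicity argument goes through on the truncated domain.

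Second, I would run the weighted-estimate machinery of Lemma~\ref{peiweightestimate} on the region $\Omega\cap B_{(1+\delta)R}(x_0)$ with Gaussian weight $G_{\lambda,x_0}$, using $f=ku/|x|$ and the improved Hardy inequalities (Lemmas~\ref{hardytype} and \ref{morepei}) to absorb the singular term into $I_\lambda$ with a loss of size $e^{k^2/\mu^*}$ in the frequency bound and an additive constant $C(k)$. This yields, for suitable $\lambda$, an inequality of the form
\begin{equation*}
\int_{\Omega\cap B_R(x_0)}|u(x,t)|^2\,dx
\;\geq\; c\,e^{-\frac{C R^2}{\lambda}}\int_{\Omega\cap B_{(1+\delta)R}(x_0)}|u(x,t)|^2 e^{-\frac{|x-x_0|^2}{4\lambda}}\,dx,
\end{equation*}
together with a lower bound on the Gaussian-weighted energy at time $t$ in terms of the weighted energy at time $T$ and the frequency $\lambda N_\lambda(T)$. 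The point is that $\lambda N_\lambda(T)$ is controlled, via the integration argument in the proof of Lemma~\ref{peiweightestimate}, by $\log\big(\|u_0\|^2_{L^2(\Omega)}/\|u(\cdot,T)\|^2_{L^2(\Omega\cap B_R(x_0))}\big)$ plus lower-order terms of size $C(k)(1+k^2/\mu^*)+R_\Omega^2/T$; this is precisely what dictates the logarithmic denominator appearing in the definition \eqref{ch} of $h_0$, while the factor $e^{1/T}$ comes from comparing $H_\lambda(0)$ with $H_\lambda(t)$ and from the elementary energy estimate $\|u(\cdot,t)\|^2\le e^{C(k)t}\|u(\cdot,s)\|^2$ for $s\le t$.

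Third, I would combine these to propagate the estimate backward from $T$ to $t\in[T-h_0,T]$: choosing $h_0$ as in \eqref{ch} makes the competing exponential factors balance, so that for all such $t$ one gets $\|u_0\|^2_{L^2(\Omega)}\le C(k)e^{1/h_0}\|u(\cdot,t)\|^2_{L^2(\Omega\cap B_{(1+\delta)R}(x_0))}$, which is \eqref{ccan4}. The main obstacle I anticipate is the bookkeeping needed to make the localized frequency argument legitimate on $\Omega\cap B_{(1+\delta)R}(x_0)$ rather than on all of $\Omega$: one must either insert a smooth cutoff supported in $B_{(1+\delta)R}(x_0)$ (which introduces commutator terms $|\nabla\eta|$, $|\Delta\eta|$ of size $\delta^{-1}R^{-1}$ and $\delta^{-2}R^{-2}$, hence the powers of $\delta$ and $R$ in \eqref{ch}) or exploit star-shapedness of a fixed sub-domain; in either case the singular term $k/|x|$ multiplied by the cutoff must still be handled by the Hardy inequalities of Section~\ref{ph}, which is exactly where the Remark after Lemma~\ref{morepei} — that those inequalities hold on any bounded Lipschitz domain — is used. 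The rest is a routine but careful tracking of constants to arrive at the stated form of $h_0$.
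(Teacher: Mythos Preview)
Your proposal misidentifies the method and contains a circularity. The paper proves Lemma~\ref{backwardenergyestimate} by a \emph{direct} weighted energy argument that uses neither the frequency function $N_\lambda$ nor Lemma~\ref{monotonicity}: one multiplies the equation by $e^{-|x-x_0|^2/h}\psi^2 u$ for a cutoff $\psi$ supported in $B_{(1+\delta)R}(x_0)$, invokes Lemma~\ref{morepei} once to absorb the singular term $k/|x|$, and arrives at a Gronwall-type differential inequality
\[
\frac{d}{dt}\int_{\Omega}|\psi u|^2 e^{-|x-x_0|^2/h}\,dx \;\leq\; A\int_{\Omega}|\psi u|^2 e^{-|x-x_0|^2/h}\,dx \;+\; B\,\|u_0\|_{L^2(\Omega)}^2,
\]
with $A\sim R^2/h^2+C(k)$ and $B\sim \delta^{-2}R^{-2}e^{-(1+3\delta/4)^2R^2/h}$ coming from the cutoff commutators on the annulus. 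Integrating this from $t$ to $T$ compares the local energy at $T$ with that at $t$ plus an error governed by $B$; choosing $h$ so that this error equals $e^{-1/T}\|u(\cdot,T)\|^2_{L^2(\Omega\cap B_R)}$ gives~\eqref{ccan4}. No monotonicity of a frequency, no star-shapedness, and no a priori control on the local energy at intermediate times are needed.

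Your plan, by contrast, is essentially the outline of Lemma~\ref{interpolation upto boundary}, and it becomes circular at the step where you propose to bound $\lambda N_\lambda(T)$ ``via the integration argument in the proof of Lemma~\ref{peiweightestimate}''. That argument integrates (i) of Lemma~\ref{monotonicity} over an earlier time interval and requires a \emph{lower} bound on $H_\lambda$ there; in the global convex case this comes from the global energy decay $\|u(\cdot,T)\|^2\le e^{C(k)T}\|u(\cdot,T/2)\|^2$, but in the localized setting the analogous lower bound on the local weighted energy at times $t<T$ is precisely the backward estimate~\eqref{ccan4} you are trying to prove. Indeed, in the paper's proof of Lemma~\ref{interpolation upto boundary} the local ratio bound~\eqref{pei10-c} is obtained \emph{by invoking} Lemma~\ref{backwardenergyestimate}. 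So the frequency-function route cannot bootstrap itself here; the more elementary energy argument must come first, and only afterwards is it fed into the localized frequency machinery.
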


\begin{proof}
We carry out the proof into three steps.\\

Step 1. Choose a suitable multiplier.\\

Let $\psi\in C^\infty_0(\Omega\cap B_{(1+\delta)R}(x_0))$ be the cut-off function verifying
\begin{equation*}
0\leq\psi\leq 1\;\text{in}\; \Omega\cap B_{(1+\delta)R}(x_0), \;\psi\equiv1 \;\text{in}\;\;\Omega\cap B_{(1+3\delta/4)R}(x_0) \;\;\text{and}\;\;|\nabla\psi|\leq \frac{C}{\delta R},
\end{equation*}
for some generic constant $ C\geq 1$ independent of $R$ and $\delta$.
For $h>0$ to be fixed later, multiplying by $e^{-\frac{|x-x_0|^2}{h}}\psi^2u$ the first equation
of Equation \eqref{6171} and integrating the latter over $\Omega\cap B_{(1+\delta)R}(x_0)$, we have
\begin{equation*}\label{s5}
\begin{split}
\frac{1}{2}\frac{d}{dt}\left(\int_{\Omega\cap B_{(1+\delta)R}(x_0)}|\psi u|^2 e^{-|x-x_0|^2/h}\,dx\right)&+\int_{\Omega\cap B_{(1+\delta)R}(x_0)}\nabla u\cdot \nabla (e^{-|x-x_0|^2/h}
\psi^2u)\,dx\\
&-\int_{\Omega\cap B_{(1+\delta)R}(x_0)}\frac{k}{|x|}
e^{-|x-x_0|^2/h}
|\psi u|^2\,dx=0.
\end{split}
\end{equation*}
From 
\begin{multline*}
\nabla (e^{-|x-x_0|^2/h}\psi^2u)=
\frac{-2(x-x_0)}{h}e^{-|x-x_0|^2/h}\psi^2u
+2e^{-|x-x_0|^2/h}\psi\nabla\psi u
+e^{-|x-x_0|^2/h}\psi^2\nabla u
\end{multline*}
and
\begin{equation*}
|\nabla(\psi u)|^2e^{-|x-x_0|^2/h}=
\psi^2|\nabla u|^2e^{-|x-x_0|^2/h}+|\nabla\psi|^2u^2e^{-|x-x_0|^2/h}
+2\psi u\nabla\psi\cdot\nabla ue^{-|x-x_0|^2/h},
\end{equation*}
it holds that
\begin{equation*}\label{ccan5}
\begin{split}
&\frac{1}{2}\frac{d}{dt}\left(\int_{\Omega\cap B_{(1+\delta)R}(x_0)}\!\!|\psi u|^2 e^{-\frac{|x-x_0|^2}{h}}
\,dx\right)
+\int_{\Omega\cap B_{(1+\delta)R}(x_0)}\!\!\Big[|\nabla(\psi u)|^2-\frac{k}{|x|}|\psi u|^2\Big]e^{-\frac{|x-x_0|^2}{h}}\,dx\\
&=\int_{\Omega\cap B_{(1+\delta)R}(x_0)}|\nabla \psi|^2u^2e^{-\frac{|x-x_0|^2}{h}}\,dx
+\frac{1}{h}\int_{\Omega\cap B_{(1+\delta)R}(x_0)}(x-x_0)\cdot
\big(\nabla(u^2)\big)\psi^2
e^{-\frac{|x-x_0|^2}{h}}\,dx.
\end{split}
\end{equation*}
By the integration by parts,
\begin{equation*}
\begin{split}
&\int_{\Omega\cap B_{(1+\delta)R}(x_0)}(x-x_0)\cdot\big(\nabla(u^2)\big)\psi^2
e^{-|x-x_0|^2/h}\,dx\\
&=-\int_{\Omega\cap B_{(1+\delta)R}(x_0)}\text{div}\big((x-x_0)\psi^2 e^{-|x-x_0|^2/h}\big)u^2\,dx\\
&=-\int_{\Omega\cap B_{(1+\delta)R}(x_0)}\!\!u^2\Big(n\psi^2 
+2\psi (x-x_0)\cdot\nabla\psi -\frac{2|x-x_0|^2}{h}\psi^2 \Big)e^{-\frac{|x-x_0|^2}{h}}\,dx\\
&=-n\int_{\Omega\cap B_{(1+\delta)R}(x_0)}|\psi u|^2 e^{-|x-x_0|^2/h}\,dx
-\int_{\Omega\cap B_{(1+\delta)R}(x_0)}2\psi u^2\nabla\psi\cdot (x-x_0) e^{-|x-x_0|^2/h}\,dx\\
&\;\;\;\;\;\;+
\frac{2}{h}\int_{\Omega\cap B_{(1+\delta)R}(x_0)}|x-x_0|^2\psi ue^{-|x-x_0|^2/h}\,dx.
\end{split}
\end{equation*}
Hence,
\begin{equation*}\label{ican5}
\begin{split}
&\frac{1}{2}\frac{d}{dt}\left(\int_{\Omega\cap B_{(1+\delta)R}(x_0)}\!\!|\psi u|^2 e^{-\frac{|x-x_0|^2}{h}}
\,dx\right)
+\int_{\Omega\cap B_{(1+\delta)R}(x_0)}\!\!\Big[|\nabla(\psi u)|^2-\frac{k}{|x|}|\psi u|^2\Big]e^{-\frac{|x-x_0|^2}{h}}\,dx\\
&\;\;=\int_{\Omega\cap B_{(1+\delta)R}(x_0)}|\nabla \psi|^2u^2e^{-|x-x_0|^2/h}\,dx-\frac{n}{h}\int_{\Omega\cap B_{(1+\delta)R}(x_0)}|\psi u|^2 e^{-|x-x_0|^2/h}\,dx\\
&\;\;\;\;\;\;-\frac{1}{h}\int_{\Omega\cap B_{(1+\delta)R}(x_0)}2\psi u^2\nabla\psi\cdot (x-x_0) e^{-|x-x_0|^2/h}\,dx\\
&\;\;\;\;\;\;+\frac{2}{h^2}\int_{\Omega\cap B_{(1+\delta)R}(x_0)}|x-x_0|^2|\psi u|e^{-|x-x_0|^2/h}\,dx\\
&\;\;\leq 2\int_{\Omega\cap B_{(1+\delta)R}(x_0)}|\nabla \psi|^2u^2e^{-|x-x_0|^2/h}\,dx-\frac{n}{h}\int_{\Omega\cap B_{(1+\delta)R}(x_0)}|\psi u|^2 e^{-|x-x_0|^2/h}\,dx\\
&\;\;\;\;\;+\frac{3}{h^2}\int_{\Omega\cap B_{(1+\delta)R}(x_0)}|x-x_0|^2|\psi u|^2e^{-|x-x_0|^2/h}\,dx.\\
\end{split}
\end{equation*}
By Lemma~\ref{morepei} (where $\varphi=\psi u$, $\lambda=h$ and $\gamma=1$), we have
\begin{equation*}
\begin{split}
&\int_{\Omega\cap B_{(1+\delta)R}(x_0)}\Big[|\nabla(\psi u)|^2-\frac{k}{|x|}|\psi u|^2\Big]e^{-|x-x_0|^2/h}\,dx\\
&\;\;\;\;\geq -\Big[\frac{n}{h}+C(k)\Big]\int_{\Omega\cap B_{(1+\delta)R}(x_0)}|\psi u|^2e^{-|x-x_0|^2/h}\,dx.
\end{split}
\end{equation*}
These last two inequalities indicate that
\begin{equation*}
\begin{split}
&\frac{1}{2}\frac{d}{dt}\left(\int_{\Omega\cap B_{(1+\delta)R}(x_0)}|\psi u|^2 e^{-|x-x_0|^2/h}
\,dx\right)
\leq \frac{3}{h^2}\int_{\Omega\cap B_{(1+\delta)R}(x_0)}|x-x_0|^2|\psi u|^2e^{-|x-x_0|^2/h}\,dx\\
&\;\;\;\;+2\int_{\Omega\cap B_{(1+\delta)R}(x_0)}|\nabla \psi|^2u^2e^{-|x-x_0|^2/h}\,dx
+C(k)\int_{\Omega\cap B_{(1+\delta)R}(x_0)}|\psi u|^2e^{-|x-x_0|^2/h}\,dx.
\end{split}
\end{equation*}
Which leads to 
\begin{equation}\label{ican6}
\begin{split}
&\frac{d}{dt}\left(\int_{\Omega\cap B_{(1+\delta)R}(x_0)}|\psi u|^2 e^{-|x-x_0|^2/h}
\,dx\right)\\
&\leq \Big[\frac{6(1+\delta)^2R^2}{h^2}+C(k)\Big]
\int_{\Omega\cap B_{(1+\delta)R}(x_0)}|\psi u|^2e^{-|x-x_0|^2/h}\,dx\\
&\;\;\;\;+\frac{C}{\delta^2R^2}
\int_{\Omega\cap (B_{(1+\delta)R}(x_0)\setminus B_{(1+\frac{3}{4}\delta)R}(x_0))}\!\!\!\!\!e^{-|x-x_0|^2/h}u^2\,dx.\\
\end{split}
\end{equation}

\bigskip

Step 2. Derive a localized energy estimate for $\int_{\Omega\cap B_R(x_0)}|u(x,T)|^2\,dx$.\\

For the simplicity of writing, we set
\begin{equation}\label{s7}
A=\frac{6(1+\delta)^2R^2}{h^2}+C(k),
\end{equation}
\begin{equation}\label{B}
B=\frac{C}{\delta^2R^2}e^{-\big(1+\frac{3}{4}\delta
\big)^2R^2/h},
\end{equation}
where $C(k)$ and $C$ are two constants in \eqref{ican6},
and
\begin{equation}\label{D}
D=\frac{\delta}{4(1+\delta)^2}.
\end{equation}
Clearly,
$$0<D< \frac{\delta}{8\delta}=\frac{1}{8}.$$
Therefore, by \eqref{ican6}, it holds that
\begin{multline*}
\frac{d}{dt}\Big(\int_{\Omega\cap B_{(1+\delta)R}(x_0)}|\psi u|^2 e^{-|x-x_0|^2/h}
\,dx\Big)\\
\leq  A\int_{\Omega\cap B_{(1+\delta)R}(x_0)}|\psi u|^2 e^{-|x-x_0|^2/h}\,dx
+Be^{C(k)}\|u_0\|_{L^2(\Omega)}^2.
\end{multline*}
Then, we have
\begin{equation*}\label{add2}
\begin{split}
\int_{\Omega\cap B_{(1+\delta)R}(x_0)}e^{-|x-x_0|^2/h}|\psi u(x,T)|^2\,dx
&\leq e^{A(T-t)}\int_{\Omega\cap B_{(1+\delta)R}(x_0)}e^{-|x-x_0|^2/h}|\psi u(x,t)|^2\,dx\\
&\;\;\;+e^{A(T-t)}Be^{C(k)}\|u_0\|_{L^2(\Omega)}^2.
\end{split}
\end{equation*}
Which  implies that when $T-Dh\leq t\leq T$,
\begin{equation*}
\begin{split}
\int_{\Omega\cap B_{(1+\delta)R}(x_0)}e^{-|x-x_0|^2/h}|\psi u(x,T)|^2\,dx
&\leq e^{ADh}\int_{\Omega\cap B_{(1+\delta)R}(x_0)}e^{-|x-x_0|^2/h}|\psi u(x,t)|^2\,dx\\
&\;\;\;+e^{ADh}Be^{C(k)}\|u_0\|_{L^2(\Omega)}^2.
\end{split}
\end{equation*}
Since $\psi(x)=1$ and $e^{-|x-x_0|^2/h}\geq e^{-R^2/h}$, $x\in \Omega\cap B_R(x_0)$, we have
\begin{equation}\label{ccan8}
\begin{split}
\int_{\Omega\cap B_{R}(x_0)}|u(x,T)|^2\,dx
&\leq e^{ADh+\frac{R^2}{h}}\int_{\Omega\cap B_{(1+\delta)R}(x_0)}e^{-|x-x_0|^2/h}| u(x,t)|^2\,dx\\
&\;\;\;+e^{ADh+\frac{R^2}{h}}Be^{C(k)}\|u_0\|_{L^2(\Omega)}^2.
\end{split}
\end{equation}
In view of \eqref{s7}, \eqref{B} and \eqref{D}, it holds that
\begin{equation}\label{ccan11}
\begin{split}
ADh+\frac{R^2}{h}&\leq\big(1+\frac{3\delta}{2}\big)\frac{R^2}{h}+C(k)
\end{split}
\end{equation}
and
\begin{equation}\label{10111}
\begin{split}
e^{ADh+\frac{R^2}{h}}Be^{C(k)}&\leq\frac{C_1(k)}{\delta^2R^2}
\exp\left\{\frac{R^2}{h}\Big[1+\frac{3\delta}{2}
-\big(1+\frac{3}{4}\delta\big)^2\Big]\right\},
\end{split}
\end{equation}
for some new constant $C_1(k)$.

\bigskip

Step 3. Fix  $h>0$.\\

By \eqref{10111}, we have
\begin{equation*}
e^{ADh+\frac{R^2}{h}}Be^{C(k)}\leq e^{-\frac{R^2}{h}\frac{9\delta^2}{16}
}\frac{C_1(k)}{\delta^2R^2}\leq e^{-\frac{\delta^2R^2}{2h}}
\frac{C_1(k)}{\delta^2R^2}.
\end{equation*}
This, along with \eqref{ccan8}, derives that the inequality
\begin{equation}\label{ccan9}
\begin{split}
\int_{\Omega\cap B_{R}(x_0)}|u(x,T)|^2\,dx
&\leq e^{ADh+\frac{R^2}{h}}
\int_{\Omega\cap B_{(1+\delta)R}(x_0)} |u(x,t)|^2\,dx\\
&\;\;+e^{-\frac{\delta^2R^2}{2h}}
\frac{C_1(k)}{\delta^2R^2}\|u_0\|_{L^2(\Omega)}^2,
\end{split}
\end{equation}
holds  when $T-Dh\leq t\leq T\leq1$.

Fix
\begin{equation*}
h=\frac{\delta^2R^2/2}{\log\Big[
\frac{C_1(k)}{\delta^2R^2}e^{\frac{1}{T}}\frac
{\|u_0\|^2_{L^2(\Omega)}}{\|u(\cdot,T)\|^2_{L^2(\Omega\cap B_R(x_0))}}\Big]},
\end{equation*}
to be such that
\begin{equation}\label{ccan10}
e^{-\frac{\delta^2R^2}{2h}}
\frac{C_1(k)}{\delta^2R^2}\|u_0\|_{L^2(\Omega)}^2=e^{-\frac{1}{T}}
\|u(\cdot,T)\|_{L^2(\Omega\cap B_R(x_0))}^2.
\end{equation}
Let $h_0=Dh$. Then $h_0$  satisfies
$h_0\in(0,T)$. Hence
it  follows from \eqref{D}, \eqref{ccan11} and  \eqref{ccan9}
that when $t\in[T-h_0,T]$,
\begin{equation}\label{valid}
\begin{split}
\big(1-e^{-\frac{1}{T}}\big)\|u(\cdot,T)\|^2_{L^2(\Omega\cap B_R(x_0))}
&\leq e^{ADh+\frac{R^2}{h}}\|u(\cdot,t)\|_{L^2(\Omega\cap B_{(1+\delta)R}(x_0))}^2\\
&\leq C(k)e^{\frac{R^2}{h}\big(1+\frac{3\delta}{2}\big)}
\|u(\cdot,t)\|_{L^2(\Omega\cap B_{(1+\delta)R}(x_0))}^2.
\end{split}
\end{equation}
 Because
\begin{equation*}
\begin{split}
&1-e^{-\frac{1}{T}}\geq \frac{1}{2}\,,\;\;\text{when}\;\;0<T\leq1,\\
&\frac{R^2}{h}\big(1+\frac{3\delta}{2}\big)
=\frac{DR^2}{h_0}\big(1+\frac{3\delta}{2}\big)< \frac{1}{h_0},
\end{split}
\end{equation*}
the desired estimate \eqref{ccan4} is valid from \eqref{valid} and \eqref{ccan10}. 
\end{proof}

\bigskip
\subsection{The proof of Theorem~\ref{global}}

We first use Lemmas~\ref{monotonicity}, \ref{hardytype}, \ref{morepei}, \ref{backwardenergyestimate} and the similar arguments as those in Lemma~\ref{peiweightestimate} to deduce the following localized version of interpolation inequality \eqref{6172}.
\begin{lemma}\label{interpolation upto boundary}
Let $0<r<R\leq 1$  and $\delta\in(0,1]$. Suppose that
$B_r(x_0)\subset\Omega$ and $\Omega\cap B_{(1+2\delta)R}(x_0)$ is star-shaped
with center $x_0\in\Omega$. Then, there exist two constants
$N=N(\delta,R,k)\geq 1$ and $\theta=\theta(\delta,R,r,k)$ with $\theta\in (0,1)$,
such that  for any solution $u$  to Equation~(\ref{6171}) and any  $0<T\leq 1$,
\begin{equation}\label{interpolationi}
\int_{\Omega\cap B_R(x_0)}|u(x,T)|^2\;dx\leq
\left(Ne^{\frac{N}{T}}\int_{\Omega}|u(x,0)|^2\;dx\right)^{\theta}
\left(\int_{B_r(x_0)}|u(x,T)|^2\;dx\right)^{1-\theta}.
\end{equation}
\end{lemma}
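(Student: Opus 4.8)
\textbf{Proof proposal for Lemma~\ref{interpolation upto boundary}.}
The plan is to run the weighted frequency function machinery on the star-shaped set $\widetilde\Omega:=\Omega\cap B_{(1+2\delta)R}(x_0)$ exactly as in the proof of Lemma~\ref{peiweightestimate}, but starting the evolution not at time $0$ but at a slightly earlier time $T-h_0$, where $h_0$ is the quantity supplied by Lemma~\ref{backwardenergyestimate}. The point of this shift is that on $[T-h_0,T]$ the backward energy estimate \eqref{ccan4} converts the localized $L^2$-norm of $u$ at the earlier time into the global $L^2$-norm of $u_0$, which is what must appear on the right-hand side of \eqref{interpolationi}.

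First I would fix a cut-off $\chi\in C_0^\infty(\widetilde\Omega)$ with $\chi\equiv 1$ on $\Omega\cap B_{(1+\delta)R}(x_0)$ and apply Lemma~\ref{monotonicity} to the pair $(v,g)$ with $v=\chi u$, which solves an inhomogeneous heat equation $\partial_t v-\Delta v=g$ with $g=k\chi u/|x|-2\nabla\chi\cdot\nabla u-u\Delta\chi$ on $\widetilde\Omega$ (the last two terms are supported in the annulus where $\nabla\chi\neq0$). On that annulus $|x-x_0|$ is bounded below, so the Gaussian weight $G_{\lambda,x_0}$ is exponentially small there; this is the standard device for absorbing the commutator terms. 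The singular term $k\chi u/|x|$ is handled precisely as in Lemma~\ref{peiweightestimate}: Lemma~\ref{hardytype} bounds $k^2\int \chi^2u^2|x|^{-2}G_{\lambda,x_0}$ by $\tfrac{k^2}{\mu^*}I_\lambda+\tfrac{nk^2}{4\mu^*(T-t+\lambda)}H_\lambda$, which feeds into the differential inequality for $N_\lambda$, and Lemma~\ref{morepei} (with $m=2|k|$, $\gamma=1$) controls the first-order term in part (i) of Lemma~\ref{monotonicity}. Integrating the resulting inequality for $N_\lambda$ over $[T-h_0,T-h_0/2]$ and then over a sub-interval, and using part (i) of Lemma~\ref{monotonicity} as before, one gets a bound
\[
\lambda N_\lambda(T)\le C(k)\Big[\log\frac{\int_{\widetilde\Omega}|v(x,T-h_0)|^2\,dx}{\int_{\widetilde\Omega}|v(x,T)|^2\,dx}+\frac{C}{h_0}+\mathcal C\Big]
\]
for $0<\lambda\le h_0/2$, together with the analogue of \eqref{pei5} coming from Lemma~2.5 of \cite{pw2}, i.e. $\int_{\widetilde\Omega}|v(x,T)|^2e^{-|x-x_0|^2/4\lambda}\,dx\le \int_{B_r(x_0)}|u(x,T)|^2e^{-|x-x_0|^2/4\lambda}\,dx+\tfrac{16\lambda}{r^2}[\lambda N_\lambda(T)+\tfrac n4]\int_{\widetilde\Omega}|v(x,T)|^2e^{-|x-x_0|^2/4\lambda}\,dx$, valid because $B_r(x_0)\subset\widetilde\Omega$ and $\widetilde\Omega$ is star-shaped. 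Choosing $\lambda=\lambda_0$ so that the bracket on the right absorbs one half, exactly as in the proof of Theorem~\ref{g-t}, yields $\int_{\Omega\cap B_R(x_0)}|u(x,T)|^2\,dx\le 2e^{R^2/4\lambda_0}\int_{B_r(x_0)}|u(x,T)|^2\,dx$ with $\lambda_0^{-1}$ proportional to $r^{-2}$ times the logarithm above.

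Finally I would substitute the backward estimate. Since $T-h_0\in[T-h_0,T]$ and $\widetilde\Omega\subset\Omega\cap B_{(1+2\delta)R}(x_0)$, Lemma~\ref{backwardenergyestimate} (applied with $2\delta$ in place of $\delta$, say, so that $(1+2\delta)R$ is the outer radius) gives $\int_{\widetilde\Omega}|v(x,T-h_0)|^2\,dx\le\|u(\cdot,T-h_0)\|_{L^2(\Omega)}^2\le C(k)e^{1/h_0}\|u_0\|_{L^2(\Omega)}^2$, and by definition \eqref{ch} of $h_0$ the quantity $1/h_0$ is itself a constant multiple of $\log[\tfrac{C_1(k)}{\delta^2R^2}e^{1/T}\|u_0\|^2_{L^2(\Omega)}/\|u(\cdot,T)\|^2_{L^2(\Omega\cap B_R(x_0))}]$. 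Hence both the logarithm in $\lambda_0^{-1}$ and the $1/h_0$ term are bounded by $C(\delta,R,k)\log[Ne^{N/T}\|u_0\|_{L^2(\Omega)}^2/\|u(\cdot,T)\|^2_{L^2(\Omega\cap B_R(x_0))}]$. Writing $X=\|u(\cdot,T)\|^2_{L^2(\Omega\cap B_R(x_0))}$, $Y=Ne^{N/T}\|u_0\|^2_{L^2(\Omega)}$, $Z=\int_{B_r(x_0)}|u(x,T)|^2\,dx$, the estimate reads $X\le 2(Y/X)^{\beta}Z$ for a constant $\beta=\beta(\delta,R,r,k)>0$, i.e. $X^{1+\beta}\le 2Y^\beta Z$, and since trivially $Z\le X$ this rearranges to $X\le (2Y)^{\beta/(1+\beta)}Z^{1/(1+\beta)}$ up to adjusting constants, which is \eqref{interpolationi} with $\theta=\beta/(1+\beta)$ and a new $N$ absorbing the factor $2$.

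The main obstacle I expect is bookkeeping around the fact that the frequency argument must be anchored at $T-h_0$ rather than at a fixed time: because $h_0$ depends on the solution (it shrinks when $\|u(\cdot,T)\|_{L^2(\Omega\cap B_R(x_0))}$ is small relative to $\|u_0\|$), one must check that the constants produced by the monotonicity computation on $[T-h_0,T]$ — in particular the energy-comparison factors of the type $e^{R_\Omega^2/T}$ in Lemma~\ref{peiweightestimate} — are uniform in $h_0$, or else are themselves absorbed into the logarithmic term; the clean way is to keep $h_0$ as a free parameter in $(0,T)$ throughout the frequency estimate and only substitute its value of \eqref{ch} at the very end. A secondary point requiring care is verifying that $0<\lambda_0\le h_0/2$ for the chosen $\lambda_0$, which is needed for the frequency inequalities to be used on the interval where they were derived; this follows from the explicit form of $\lambda_0$ and $h_0$ after noting both are $\asymp$ (geometric constant)$\times(\log(\cdots))^{-1}$ with the same logarithm, so the inequality reduces to a comparison of the geometric prefactors and can be arranged by shrinking the prefactor in $\lambda_0$.
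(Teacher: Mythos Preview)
Your proposal is correct and follows essentially the same route as the paper: localize via a cut-off on the star-shaped piece $\Omega\cap B_{(1+2\delta)R}(x_0)$, run the frequency machinery of Lemmas~\ref{monotonicity}--\ref{morepei} on a short time window ending at $T$ (the paper uses $[T-D_1h_0,T]$ with $D_1=\delta^2R^2$ and then takes $\lambda=\varepsilon D_1h_0$), absorb the cut-off commutator terms by balancing the Gaussian decay on the annulus against the backward estimate of Lemma~\ref{backwardenergyestimate}, and conclude by the same choice-of-$\lambda$ absorption argument. One point to tighten when you write it out: in the paper Lemma~\ref{backwardenergyestimate} is used primarily as a \emph{lower} bound on $H_\lambda(t)$ for $t\in[T-D_1h_0,T]$---this is what controls both the denominator of your log ratio and the commutator$/H_\lambda$ quotients in \eqref{21-4}--\eqref{21-3}---whereas the numerator of the log needs only the forward energy estimate; getting this direction right is exactly what forces the extra scaling parameter $D_1$ so that the Gaussian factor $e^{-\delta^2R^2/(D_1h_0+\lambda)}$ beats the $e^{2/h_0}$ coming from the backward bound.
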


\begin{proof}
We only need to prove the desired estimate \eqref{interpolationi} for an arbitrarily fixed solution  $u$  to
Equation \eqref{6171} with $u(\cdot, 0)\neq 0$ in $L^2(\Omega)$.
Let $\psi\in C_0^2(\Omega\cap B_{R+2\delta R}(x_0))$ be the cut-off function such that $0\leq\psi\leq1$,
$$\psi=1\;\;\text{in}\;\;\Omega\cap B_{R+\frac{3\delta}{2} R}(x_0),\;\;
|\nabla\psi|^2+|\Delta \psi|\leq\frac{C}{\delta^2R^2},$$
where the generic constant $C$ is independent of $R$ and $\delta$.
Let $z=\psi u$ and
\begin{equation}\label{2918}
f=\partial_tz-\Delta z=\frac{k}{|x|}z-2\nabla\psi\cdot\nabla u-\Delta\psi u.
\end{equation}
Associated with the triple $(z,f,x_0)$, we set for each $\lambda>0$ and  $t\in(0,T]$,
\begin{equation*}
H_\lambda(t)=\int_{\Omega\cap B_{(1+2\delta)R}(x_0)}z^2G_{\l,x_0}\,dx
\end{equation*}
and
\begin{equation*}
N_\lambda(t)=\frac{\int_{\Omega\cap B_{(1+2\delta)R}(x_0)}|\nabla z|^2G_{\l,x_0}\,dx}{\int_{\Omega\cap B_{(1+2\delta)R}(x_0)}z^2
G_{\l,x_0}\,dx}.
\end{equation*}

\bigskip

Step 1.
For each $t\in[T-D_1h_0,T]$, where $h_0$ is given by \eqref{ch} and  $0<D_1\leq 1$ is to be fixed later, we obtain
from Lemmas~\ref{hardytype}, \ref{morepei} and the similar argument as in Lemma~\ref{peiweightestimate}, as well as Lemma~\ref{backwardenergyestimate} (cf. \cite[Lemma 4, Step 1]{PWZ} ) that
\begin{equation}\label{21-4}
\frac{\int_{\Omega\cap B_{R+2\delta R}(x_0)}zfG_{\l,x_0}\,dx}
{\int_{\Omega\cap B_{R+2\delta R}(x_0)}z^2G_{\l,x_0}\,dx}
\leq \frac{1}{2}N_\lambda(t)
+\Big[\frac{n}{8(T-t+\lambda)}+C(2k)\Big]+
\frac{C}{\delta^2R^2}e^{-\frac{\delta^2 R^2}{D_1h_0+\l}}
\big(1+T^{-1/2}\big)e^{\frac{2}{h_0}}
\end{equation}
and
\begin{equation}\label{21-3}
\frac{\int_{\Omega\cap B_{R+2\delta R}(x_0)}f^2G_{\l,x_0}\,dx}
{\int_{\Omega\cap B_{R+2\delta R}(x_0)}z^2G_{\l,x_0}\,dx}
\leq \frac{3k^2}{\mu^*}
N_\lambda(t)+\frac{3nk^2}{4\mu^*(T-t+\lambda)}+
\frac{C}{\delta^4 R^2}e^{-\frac{\delta^2 R^2}{D_1h_0+\l}}
\big(1+T^{-1/2}\big)e^{\frac{2}{h_0}}.
\end{equation}

\bigskip

Step 2. It follows from  \cite[Lemma 2.5 ]{pw2}
that
\begin{align*}
&\int_{\Omega\cap B_{R+2\delta R}(x_0)}|z(x,T)|^2e^{-\frac{|x-x_0|^2}{4\l}}\,dx\\
&\leq \int_{B_{r}(x_0)}|u(x,T)|^2
e^{-\frac{|x-x_0|^2}{4\l}}\,dx
+\frac{16\l}{r^2}\Big[\l N_\l(T)+\frac{n}{4}\Big]
\int_{\Omega\cap B_{R+2\delta R}(x_0)}|z(x,T)|^2
e^{-\frac{|x-x_0|^2}{4\l}}\,dx.
\end{align*}

Step 3. By  \eqref{21-3} and (ii)  of Lemma~\ref{monotonicity}
(where $u=z$ and $f$ is given by \eqref{2918}), we have that for each $t\in[T-D_1h_0,T]$,
\begin{equation*}\label{21-2-2}
\frac{d}{dt}N_\l(t)\leq \frac{N_\l(t)}{T-t+\lambda}+\frac{3k^2}{\mu^*}
N_\lambda(t)+\frac{3nk^2}{4\mu^*(T-t+\lambda)}+
\frac{C}{\delta^4 R^2}e^{-\frac{\delta^2 R^2}{D_1h_0+\l}}
\big(1+T^{-1/2}\big)e^{\frac{2}{h_0}}.
\end{equation*}
Consequently,
\begin{equation*}
\frac{d}{dt}\big[(T-t+\lambda)N_\l(t)\big]\leq
\frac{3k^2}{\mu^*}(T-t+\l)N_\l(t)+\frac{3nk^2}{4\mu^*}
+\frac{C}{\delta^4 R^2}e^{-\frac{\delta^2 R^2}{D_1h_0+\l}}
\big(1+T^{-1/2}\big)e^{\frac{2}{h_0}}(T+\lambda).
\end{equation*}
It follows from the above inequality  that when $t\in[T-D_1h_0,T]$,
\begin{equation*}
\begin{split}
e^{-3Tk^2/\mu^*}\l N_\l(T)&\leq
(T-t+\l)N_\l(t)+\frac{3nk^2}{4\mu^*}
+\frac{C}{\delta^4 R^2}e^{-\frac{\delta^2 R^2}{D_1h_0+\l}}
\big(1+T^{-1/2}\big)e^{\frac{2}{h_0}}(T+\lambda).
\end{split}
\end{equation*}
Integrating  the above inequality with respect to $t$ over $[T-D_1h_0,T-D_1h_0/2]$, we obtain that
\begin{multline}\label{pei11-c}
e^{-3Tk^2/\mu^*}\l N_\l(T)\leq \frac{2(D_1h_0+\l)}{D_1h_0}
\int_{T-D_1h_0}^{T-D_1h_0/2}N_\l(t)\,dt\\+
\frac{3nk^2}{4\mu^*}+\frac{C}{\delta^4 R^2}e^{-\frac{\delta^2 R^2}{D_1h_0+\l}}
\big(1+T^{-1/2}\big)e^{\frac{2}{h_0}}(T+\lambda).
\end{multline}
On the other hand, we see from \eqref{21-4} and  (i) in Lemma~\ref{monotonicity} (where $u=z$ and $f$ is given by \eqref{2918}) that
\begin{equation*}
N_\l(t)\leq -\frac{d}{dt}\log \big(H_\l(t)\big)+
\Big[C(2k)+\frac{n}{4(T-t+\l)}
\Big]+\frac{C}{\delta^2}e^{-\frac{\delta^2 R^2}{D_1h_0+\l}}
\big(1+T^{-1/2}\big)e^{\frac{2}{h_0}}.
\end{equation*}
%%%%%%%%%%%%%%%%%%%%%%%%%%%%%%%%%%%%%%%%%%%%%%%%%%%%%%
Integrating the above inequality over $[T-D_1h_0,T-D_1h_0/2]$, we get that
\begin{equation}\label{pei9-c}
\int_{T-D_1h_0}^{T-D_1h_0/2}N_\l(t)\,dt\leq\log \frac{H_\l(T-D_1h_0)}{H_\l(T-D_1h_0/2)}+\frac{T}{2}\Big[C(k)+\frac{n}{2T}
\Big]+
\frac{C}{\delta^2}e^{-\frac{\delta^2 R^2}{D_1h_0+\l}}
\big(1+T^{-1/2}\big)e^{\frac{2}{h_0}}.
\end{equation}
Notice from Lemma~\ref{backwardenergyestimate} that
\begin{equation}\label{pei10-c}
\frac{H_\l(T-D_1h_0)}{H_\l(T-D_1h_0/2)}\leq e^{C(k)+\frac{(R+\delta R)^2}{2D_1h_0}+\frac{2}{h_0}}.
\end{equation}
Therefore, it follows from \eqref{pei11-c}, \eqref{pei9-c} and  \eqref{pei10-c} that when $0<\l\leq D_1h_0$,
\begin{equation*}\label{pei12-c}
\begin{split}
\l N_\l(T)&\leq
4e^{\frac{3k^2}{\mu^*}}
\left[\frac{(R+\delta R)^2}{2D_1h_0}+\frac{2}{h_0}+C(k)+\frac{n}{2}\big(
1+\frac{3k^2}{\mu^*}\big)+\mathcal{M}\right]
\end{split}
\end{equation*}
with
$$\mathcal{M}=\frac{C}{\delta^4 R^2}e^{-\frac{\delta^2 R^2}{D_1h_0+\l}}
\big(1+T^{-1/2}\big)e^{\frac{2}{h_0}}
+\frac{C}{\delta^2}e^{-\frac{\delta^2 R^2}{D_1h_0+\l}}
\big(1+T^{-1/2}\big)e^{\frac{2}{h_0}}.$$

\bigskip

Step 4. Let
$$D_1=\delta^2 R^2\;\;\;\text{and}\;\;\l=\varepsilon D_1h_0.$$
Here $\varepsilon\in(0,1)$ will be choose later.
Then
\begin{equation*}
\frac{2}{h_0}-\frac{\delta^2 R^2}{D_1h_0+\l}
\leq 0,
\end{equation*}
and consequently
\begin{equation*}
\mathcal M\leq \frac{C}{\delta^4 R^2}\big(1+T^{-1/2}\big).
\end{equation*}
Therefore, by Step 4 and noticing that $0<h_0<T\leq1$, we have
\begin{eqnarray*}
\frac{16\l}{r^2}\Big[\l N_\l(T)+\frac{n}{4}\Big]
&\leq& \frac{\varepsilon D_1}{r^2}e^{\frac{3k^2}{\mu^*}}
\Big[C(\mu^*,k,n)h_0+D_1h_0 \frac{C}{\delta^4R^2}\big(1+T^{-1/2}\big)
+\frac{(R+\delta R)^2}{2D_1}
\Big]\\
&\leq&\varepsilon C(\delta,R,r,k,\mu^*).
\end{eqnarray*}

\bigskip

Step 5. Finally, we choose $\varepsilon\in(0,1)$, which is independent of $T$, such that
\begin{align*}
\frac{16\l}{r^2}\Big[\l N_\l(T)+\frac{n}{4}\Big]\leq
 \varepsilon C(\delta,R,r,k,\mu^*)=\frac{1}{2}.
\end{align*}
Hence, by Step 2, we get that
\begin{align*}
\int_{\Omega\cap B_{R+2\delta R}(x_0)}|z(x,T)|^2e^{-\frac{|x-x_0|^2}
{4\l}}\,dx
\leq 2\int_{B_{r}(x_0)}|u(x,T)|^2e^{-\frac{|x-x_0|^2}{4\l}}\,dx,
\end{align*}
which implies that
\begin{align}\label{dudu}
\int_{\Omega\cap B_{R}(x_0)}|u(x,T)|^2\,dx
\leq e^{\frac{R^2}{4\l}}
\int_{B_{r}(x_0)}|u(x,T)|^2\,dx.
\end{align}
Recall that
$$\l=\varepsilon D_1 h_0=
\frac{C(r,R,\delta,k,\mu^*)}{
\log\Big[\frac{C}{\delta^2 R^2}e^{\frac{1}{T}}\frac{\|u(\cdot,0)\|^2_{L^2(\Omega)}}{\|u(\cdot,T)\|
^2_{L^2(\Omega\cap B_R(x_0))}}\Big]}\;.
$$
Thereby,
$$e^{\frac{R^2}{4\l}}=
\Big[\frac{C}{\delta^2 R^2}e^{\frac{1}{T}}\frac{\|u(\cdot,0)\|^2_{L^2(\Omega)}}{\|u(\cdot,T)\|
^2_{L^2(\Omega\cap B_R(x_0))}}\Big]^{C(r,R,\delta,k,\mu^*)}.
$$
Which, together with \eqref{dudu},  completes the proof.
\end{proof}

\bigskip

 At the end, by successively making use  of Lemma~\ref{interpolation upto boundary} and an argument of propagation of smallness (see, e.g., \cite{PWZ}), we have the following.

\begin{proof}[The proof of Theorem~\ref{global}]
Firstly, by Lemma~\ref{interpolation upto boundary}  and
constructing a sequence of balls chained along a curve, we claim that, for any compact sets $K_1$ and $K_2$
with non-empty interior in $\Omega$, there are constants
$N_1=N_1(K_1,K_2,k,n)\geq 1$ and $\alpha_1=\alpha_1(K_1,K_2,k,n)\in(0,1)$
such that
\begin{equation}\label{interior}
\int_{K_1}|u(x,T)|^2\;dx\leq \left(N_1e^{\frac{N_1}{T}}
\int_{\Omega}|u(x,0)|^2\;dx\right)^{1-\alpha_1}
\left(\int_{K_2}|u(x,T)|^2\;dx\right)^{\alpha_1}.
\end{equation}
Indeed, let
\begin{equation}\label{covering}
K_1\subset\bigcup_{i=1}^{p}B_{r}(x_i)\subset\Omega, \;\;\;\;
B_{r}(x_0)\subset K_2,
\end{equation}
and for each $B_{r}(x_i)$ with $1\leq i\leq p$, there exists a chain
of balls $B_{r}(x_i^j)$, $1\leq j\leq n_i$, such that
\begin{equation}\label{chain}
\begin{split}
&B_r(x_i^{1})=B_{r}(x_i),\;\;B_r(x_i^{n_i})=B_{r}(x_0),\\
&B_{r}(x_i^j)\subset B_{2r}(x_i^{j+1})\subset\Omega,\;\;1\leq j\leq
n_i-1,
\end{split}
\end{equation}
where $0<r=r(\om,K_1,K_2)<1/2$ is a constant. By
Lemma~\ref{interpolation upto boundary},  we get that there are
$N_i^j= N_i^j(r,k,n)\geq 1$ and $\theta_i^j=\theta_i^j(r,k,n)\in(0,1)$ such
that
\begin{equation*}
\int_{B_{2r}(x_i^{j+1})}|u(x,T)|^2\;dx\leq
\left(N_i^je^{\frac{N_i^j}{T}}\int_{\Omega}|u(x,0)|^2\;dx\right)
^{\theta_i^j}
\left(\int_{B_r(x_i^{j+1})}|u(x,T)|^2\;dx\right)^{1-\theta_i^j}.
\end{equation*}
This, together with \eqref{chain}, implies that there are
$N_i=N_i(K_1,K_2,k,n)\geq 1$ and $\theta_i=\theta_i(K_1,K_2,k,n)\in(0,1)$
such that
\begin{equation*}
\int_{B_{r}(x_i)}|u(x,T)|^2\;dx\leq
\left(N_ie^{\frac{N_i}{T}}\int_{\Omega}|u(x,0)|^2\;dx\right)
^{\theta_i} \left(\int_{B_r(x_0)}|u(x,T)|^2\;dx\right)^{1-\theta_i}.
\end{equation*}
This, along with (\ref{covering}) leads to the estimate \eqref{interior}.

Secondly, since $\partial\Omega$ is of class $C^2$,  there are a
finite set $ \Lambda\subset \Omega$, $0<\delta<1$ and a family of
positive numbers $0<r_x\le 1$, $x\in\Lambda$, such that
\begin{equation*}
\partial\Omega\subset \bigcup_{x\in \Lambda} B_{r_x}(x)\;\;\;\mbox{and}\;\;\; B_{\left(1+2\delta\right)r_x}(x)\cap\Omega\ \text{is}\ \text{star-shaped with center}\;x.
\end{equation*}
Then we apply Lemma~\ref{interpolation upto boundary} with
$\Omega\cap B_{(1+2\delta)r_x}(x),x\in\Lambda$, and the same
arguments as above to get that, when $\Gamma$ is a neighborhood of
$\partial\Omega$ and $K_3$ is a compact set with non-empty interior
in $\Omega$, there are constants $N_2=N_2(\Gamma,K_3,k,n)\geq 1$ and
$\alpha_2=\alpha_2(\Gamma,K_3,k,n)\in(0,1)$ such that
\begin{equation*}
\int_{\Gamma}|u(x,T)|^2\;dx\leq \left(N_2e^{\frac{N_2}{T}}
\int_{\Omega}|u(x,0)|^2\;dx\right)^{1-\alpha_2}
\left(\int_{K_3}|u(x,T)|^2\;dx\right)^{\alpha_2}.
\end{equation*}

Finally, we derive the desire estimate \eqref{cccc1} from the
previous two statements with $\Omega\subset(\Gamma\cup K_1)$ and
$(K_2\cup K_3)\subset\omega$.
\end{proof}

\medskip
\section{Conclusion and further comments}\label{sec4}

In the present paper, by adapting  the frequency function method in \cite{Phung-Wang1, PWZ}, we have derived a H\"older-type quantitative estimate of unique continuation  for any solution to the heat equation with singular Coulomb potentials in either a bounded Lipschitz domain or a bounded domain  with a $C^2$-smooth boundary. 

Several remarks are given in order.

\begin{itemize}

\item[1.] \textit{Applications in Control Theory}.
As addressed in the introduction, such a  kind of quantitative estimate of  unique continuation has been proved to be applicable in the subject of control theory in recent years (see e.g. \cite{AEWZ,EMS,EMS2,pw2,pwx,wz,WZ,WY,Yu,ZB}).
In particular, it can be used to establish the null controllability from measurable subsets of positive measure, and to obtain the bang-bang property of time and norm optimal control problems for the heat equation with singular Coulomb 
potentials.

\medskip

\item[2.] \textit{Variable Coefficients}.
In the main theorems of this paper, we established the quantitative estimate of  unique continuation for the heat equation
with a singular  potential $k/|x|$, where $k$ is a constant. If we allow  that $k$ is a bounded function depending on both space and time variables instead of 
being a constant, then the corresponding quantitative estimate for the heat equation
with a singular  potential $|x|^{-1}k(x,t)$ could  also be  obtained by using the same arguments as in the current  paper with some minor modifications.  The details are left to the interested reader.
Notice that quantitative estimates of strong unique continuation for second order parabolic equations with bounded potentials have been obtained in \cite{BP, EFV,Phung-Wang1,pw2,PWZ}), by either the Carleman estimate  method or the frequency function method.
However, it is still an open question whether it could  be improved for the general second-order parabolic equation
with singular Coulomb potentials.

\bigskip

\item[3.]  \textit{Inverse Square Potentials}. 
Another question is whether one can still expect results as in Theorems \ref{g-t} and \ref{global} if the Coulomb 
potential in \eqref{6171} is replaced by the inverse square potential $\mu/|x|^2$, which is known as the most singular lower-order potential from the viewpoint of well-posedness and unique continuation (cf., e.g., \cite{VZ} and \cite{Lin1}).  The strong unique continuation at the singularity point  for the heat equation with
inverse square potentials has been obtained in \cite{Poon} and \cite{Felli} (see also \cite{O}).
By  Carleman estimates associated with the heat operator with
inverse square potentials,
the observability inequality from an observation region,  which is  away from the singularity,    for the heat equation with
inverse square potentials has also been obtained in \cite{er}
and \cite{vz}.   
Because of the strong singularity near the origin, however, our present methods
do not allow us to derive the quantitative strong unique continuation estimate \eqref{6172} for the heat equation
with this kind of  inverse square potential, although we dare to conjecture that it should be possible. 

\end{itemize}

\bigskip

%\noindent 
\textbf{Acknowledgments}.
The author would like to thank Prof. G. Wang for drawing the author's attention on this subject, and Prof. L. Escauriaza and Prof. K. D. Phung for fruitful discussions we had related to this work.
The author acknowledge the financial support by the National Natural Science Foundation of China under grants 11501424.
%and MTM-2014-53145-P (Spain).

\end{document}